\newtheorem{theorem}{Theorem}[section]
\newtheorem{lemma}[theorem]{Lemma}
\newtheorem{proposition}[theorem]{Proposition}
\newtheorem{corollary}[theorem]{Corollary}
\theoremstyle{definition}
\newtheorem{definition}[theorem]{Definition}
\newtheorem{example}[theorem]{Example}
\theoremstyle{remark}
\newtheorem{remark}[theorem]{Remark}
\numberwithin{equation}{section}
\renewcommand{\leq}{\leqslant}
\renewcommand{\geq}{\geqslant}
\renewcommand{\epsilon}{\varepsilon}
\begin{document}
\sloppy

\setcounter{page}{1}

\title[]{Subrearrangement-invariant function spaces}\title[Subrearrangement-invariant function spaces]
{Subrearrangement-invariant function spaces}
\author[Ben Wallis]
{Ben Wallis}
\address 
{Division of Math/Science/Business, Kishwaukee College, Malta, IL 60150, United States.
} \email{
benwallis@live.com
}\




\keywords{Rearrangement-invariant function spaces, subsymmetric bases in Banach spaces.}
\subjclass[2010]{Primary 46E30; Secondary 06A05, 28A99.}

\begin{abstract}
Rearrangement-invariance in function spaces can be viewed as a kind of generalization of 1-symmetry for Schauder bases.  We define subrearrangement-invariance in function spaces as an analogous generalization of 1-subsymmetry.  It is then shown that every rearrangement-invariant function space is also subrearrangement-invariant.  Examples are given to demonstrate that not every function space on $(0,\infty)$ admits an equivalent subrearrangement-invariant norm, and that not every subrearrangement-invariant function space on $(0,\infty)$ admits an equivalent rearrangement-invariant norm.  The latter involves constructing a new family of function spaces inspired by D.J.H.\ Garling, and we further study them by showing that they are Banach spaces containing copies of $\ell_p$.
\end{abstract}
 
\maketitle

\section{Introduction}

A 1-symmetric basis can be viewed as a rearrangement-invariant function space over $\mathbb{N}$ (equipped with the counting measure).  Some bases, though, are only 1-subsymmetric, and we would like to explore an analogous property for certain nonatomic function spaces corresponding to this generalization.  For instance, in the case where we have a function space $X$ on $(0,\infty)$, we say that it is {\it subrearrangement-invariant} whenever the following holds:
\begin{quotation}\noindent For every function $f\in X$, every measurable $F\subseteq(0,\infty)$, and every strictly increasing bijection $m:(0,\infty)\to F$ such that $m$ and $m^{-1}$ are both measure-preserving, we have $\|f\circ m\|_X=\|f\textbf{1}_F\|_X$.\end{quotation}
Later, in \S2, we give a broader definition for other nonatomic function spaces besides $(0,\infty)$.

D.J.H.\ Garling was the first to prove that not every subsymmetric basis is symmetric, by publishing a counterexample in 1968 (\cite[\S5]{Ga68}).  Garling's sequence space, then, already furnishes us with a subrearrangement-invariant function space on $\mathbb{N}$ which is not rearrangement-invariant under any equivalent norm.  It is also well-known that there exist 1-unconditional bases which are not subsymmetric, and hence function spaces on $\mathbb{N}$ which fail to admit an equivalent subrearrangement-invariant norm.  In \S3, we extend these results to a purely nonatomic case, exhibiting an example of a function space on $(0,\infty)$ which is not essentially subrearrangement-invariant, and another example which is subrearrangement-invariant but not essentially rearrangement-invariant.  Some geometric properties of these ``Garling function spaces'' are then explored in \S4.

All Banach spaces and function spaces are taken over the real field $\mathbb{R}$, and all measure spaces we assume to be countably additive and $\sigma$-finite.   If $\theta$ and $\phi$ are real-valued functions, we use the symbolism $\theta(x)\approx_\epsilon\phi(y)$ to mean that for any $\epsilon>0$, the arguments $x$ and $y$ can be chosen such that
$$
\phi(y)-\epsilon<\theta(x)<\phi(y)+\epsilon.
$$
Beyond that, all notation and terminology is either standard (such as appears, for instance, in \cite{LT77}) or defined as encountered.  In \S5 we give an appendix with proofs of some measure-theoretic results we need for the main results, that are probably already known, but which we couldn't locate in the published literature.

\section{Subrearrangement-invariant function spaces}

For the following definition, $\beta$ denotes the Borel measure.

\begin{definition}Let $(\Omega,\mu)$ be a $\sigma$-finite measure space, and let $\mathcal{M}_0^+(\Omega)$ denote the cone of (nonnegative) $(\mu,\beta)$-measurable functions $f:\Omega\to[0,\infty]$.  Suppose $\rho:\mathcal{M}_0(\Omega)\to[0,\infty]$ satisfies the following properties for all $a\in(0,\infty)$ and all $f,g\in\mathcal{M}_0^+(\Omega)$:
\begin{itemize}
\item[(i)]  $\rho(f+g)\leq\rho(f)+\rho(g)$;
\item[(ii)]  $\rho(af)=a\rho(f)$; and
\item[(iii)]  $\rho(f)=0$ if and only if $f\equiv 0$ almost everywhere.
\end{itemize}
We can then define a normed linear space $(X,\|\cdot\|_X)$ consisting the a.e.-equivalence classes of measurable functions $f:\Omega\to[-\infty,\infty]$ satisfying $\|f\|_X:=\rho(|f|)<\infty$.  In this case we say that $\rho$ is a {\bf function norm} on $\Omega$, and $X$ is a {\bf function space} on $\Omega$ with respect to $\rho$.
\end{definition}

\noindent Note that our definition differs from other classes of function spaces such as Banach function spaces defined in \cite[\S1]{BS88} or K\"othe function spaces.  It is suitable for the present purposes, however.

\begin{remark}
If $(e_i)_{i=1}^\infty$ is a 1-unconditional basis for a Banach space $X$, we can define a function norm $\rho_X$ by setting, for all $f:\mathbb{N}\to[0,\infty]$,
$$\rho_X(f)=\left\{\begin{array}{ll}\displaystyle\left\|\sum_{i=1}^\infty f(i)e_i\right\|_X&\text{ if }\sum_{i=1}^\infty f(i)e_i\text{ converges, and}\\\\\infty&\text{ otherwise.}\end{array}\right.$$
In this way, $X$ can be viewed as a function space on $\mathbb{N}$, with respect to $(e_i)_{i=1}^\infty$.
\end{remark}

Let $(\Omega,\mu)$ be a $\sigma$-finite measure space, and $f:\Omega\to[-\infty,\infty]$ a $(\mu,\beta)$-measurable function (where $\beta$ is the Borel measure).  The {\bf distribution function} $\text{dist}_f:[0,\infty]\to[0,\infty]$ of $f$ is given by the rule
$$\text{dist}_f(s)=\mu\left\{x\in\Omega:|f(x)|>s\right\}.$$
Two such measurable functions $f$ and $g$ are said to be {\bf equimeasurable} whenever $\text{dist}_f=\text{dist}_g$. In this case we write $f\sim g$.

\begin{definition}
Let $(\Omega,\mu)$ be a $\sigma$-finite measure space.  A function space $X$ on $\Omega$ is called {\bf rearrangement-invariant} iff $\|f\|_X=\|g\|_X$ for all equimeasurable functions $f,g\in X$.  It is {\bf essentially rearrangement-invariant} provided it admits an equivalent rearrangement-invariant norm.
\end{definition}

\begin{remark}
The above definition follows \cite{BS88} rather than the somewhat more stilted definition of rearrangement-invariance found, for instance, in \cite{LT79}.
\end{remark}

Let $(E,\mu_E)$ and $(F,\mu_F)$ be measure spaces.  A map $m:E\to F$ is called {\bf $\boldsymbol{(\mu_E,\mu_F)}$-measurable} (or, when $\mu_E$ and $\mu_F$ are clear from context, simply, {\bf measurable}) if whenever $A$ is a measurable subset of $F$, the set $m^{-1}(A)$ is measurable in $E$.  The map $m$ is called a {\bf measure-preserving transformation} if whenever $A$ is a measurable subset of $F$, the set $m^{-1}(A)$ is measurable with $\mu_E(m^{-1}(A))=\mu_F(A)$.  If furthermore $m$ is bijective with $m^{-1}$ also measure-preserving, we say that it is a {\bf measure-isomorphism}.

\begin{definition}
If $E$ and $F$ are totally-ordered measure spaces, we denote by $\mathbb{MO}(E,F)$ the set of all maps $m:E\to F$ such that $m$ is both a stictly increasing measure-isomorphism.  Any such $m\in\mathbb{MO}(E,F)$ is called an {\bf$\boldsymbol{\mathbb{MO}}$-isomorphism} between $E$ and $F$.
\end{definition}

Let us now introduce the main subject under study.

\begin{definition}
Let $(\Omega,\mu)$ be a totally-ordered $\sigma$-finite measure space satisfying $\mu(\Omega)=\infty$.  We say that a function space $X$ on $\Omega$ is {\bf subrearrangement-invariant} if for every measurable $F\subseteq\Omega$, every $m\in\mathbb{MO}(\Omega,F)$, and every $f\in X$, we have $\|f\circ m\|_X=\|f$\emph{{\bf 1}}$_F\|_X$.  We say that $X$ is {\bf essentially subrearrangement-invariant} whenever it admits an equivalent subrearrangement-invariant norm.
\end{definition}

\noindent Here, the restriction $\mu(\Omega)=\infty$ has been included since $\mathbb{MO}(\Omega,F)$ would be empty otherwise, whenever $\mu(F)\neq\mu(\Omega)$, and that would make every function space on $\Omega$ trivially subrearrangement-invariant.

The following well-known fact is proved in the appendix.

\begin{proposition}\label{1-symmetric}
A 1-unconditional basis $(e_i)_{i=1}^\infty$ for a real Banach space $X$ is 1-symmetric if and only if $X$ is rearrangement-invariant as a function space on $\mathbb{N}$ with respect to $(e_i)_{i=1}^\infty$.  It is symmetric if and only if $X$ is essentially rearrangement-invariant.
\end{proposition}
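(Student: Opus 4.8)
The plan is to exploit the fact that, for the counting measure on $\mathbb{N}$, equimeasurability is exactly rearrangement by a permutation: two nonnegative functions $f,g:\mathbb{N}\to[0,\infty]$ satisfy $f\sim g$ if and only if there is a bijection $\pi:\mathbb{N}\to\mathbb{N}$ with $g=f\circ\pi$. Indeed, $\mathrm{dist}_f(s)=\#\{i:f(i)>s\}$ records precisely the multiset of values of $f$, and two countable multisets agree exactly when one is a reindexing of the other. I would state and prove this correspondence first, taking care of the boundary cases where $f$ assumes the value $\infty$ or where $\sum_{i=1}^\infty f(i)e_i$ fails to converge (so that $\|f\|_X=\infty$ by the definition of $\rho_X$, and hence the same must hold for every equimeasurable $g$).

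For the first equivalence, suppose first that $(e_i)$ is $1$-symmetric. Given equimeasurable $f,g\in X$, since $\|\cdot\|_X$ depends only on $|f|$ I may assume $f,g\geq 0$ and write $g=f\circ\pi$; then $\sum_i g(i)e_i=\sum_j f(j)e_{\pi^{-1}(j)}$, and $1$-symmetry yields $\|g\|_X=\|f\|_X$, so $X$ is rearrangement-invariant. Conversely, if $X$ is rearrangement-invariant, I would verify $1$-symmetry directly: for scalars $(a_i)$, a permutation $\sigma$, and signs $(\epsilon_i)$, $1$-unconditionality reduces $\|\sum_i\epsilon_i a_i e_{\sigma(i)}\|_X$ to $\|\sum_i|a_i|e_{\sigma(i)}\|_X$, and since the function $(|a_{\sigma^{-1}(j)}|)_j$ and $(|a_i|)_i$ are equimeasurable, rearrangement-invariance identifies this with $\|\sum_i a_i e_i\|_X$.

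For the second equivalence, the key observation is that any function-space norm is automatically $1$-unconditional, since $\rho(|f|)$ is unchanged when the signs of the coordinates of $f$ are flipped. Thus if $X$ is essentially rearrangement-invariant, witnessed by an equivalent rearrangement-invariant function norm $\rho'$, then $(e_i)$ is $1$-unconditional with respect to $\rho'$, hence $1$-symmetric by the first part, and symmetry---being invariant under passage to an equivalent norm---follows. For the reverse direction I would symmetrize: set $|||x|||=\sup_{\sigma}\|\sum_i a_i e_{\sigma(i)}\|_X$ for $x=\sum_i a_i e_i$, the supremum taken over all permutations $\sigma$. Symmetry of the basis furnishes a uniform constant $C$ with $\|x\|_X\leq|||x|||\leq C\|x\|_X$, so $|||\cdot|||$ is an equivalent norm; $1$-unconditionality shows it depends only on $|x|$, so it arises from a function norm $\rho'$; and reindexing the supremum shows $\rho'$ makes $(e_i)$ $1$-symmetric, whence rearrangement-invariant by the first part.

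The routine verifications---that $|||\cdot|||$ satisfies the function-norm axioms (i)--(iii) and that the supremum is finite---are straightforward from symmetry and the triangle inequality, so the main obstacle is really the first correspondence: pinning down that equimeasurability for the counting measure is exactly permutation equivalence, and dispatching the degenerate cases cleanly enough that convergence of $\sum_{i=1}^\infty f(i)e_i$ is preserved under rearrangement.
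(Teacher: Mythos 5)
Your overall architecture matches the paper's: reduce equimeasurability over $(\mathbb{N},\#)$ to a rearrangement, push that through $1$-symmetry for the forward direction, use the fact that a permutation is a measure-preserving bijection for the converse, and handle the ``essential'' statement via the symmetrized norm $|||x|||=\sup_\sigma\|\sum_i a_ie_{\sigma(i)}\|$. Your converse direction and your treatment of the second equivalence are fine as described (the observation that any function norm is automatically $1$-unconditional is a clean way to get the forward implication of the second equivalence).

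The gap is in the correspondence you yourself single out as the main obstacle. It is not true that $f\sim g$ forces $g=f\circ\pi$ for a permutation $\pi$ of $\mathbb{N}$: the distribution function $\mathrm{dist}_f(s)=\#\{i:f(i)>s\}$ does \emph{not} record the multiset of values of $f$. It never sees the multiplicity of the value $0$ (no $s\geq 0$ satisfies $0>s$), so $f=(1,\tfrac12,\tfrac13,\dots)$ and $g=(1,0,\tfrac12,0,\tfrac13,0,\dots)$ are equimeasurable elements of, say, $c_0$ that are not permutations of one another; it also fails to separate values taken with infinite multiplicity, e.g.\ $(1,2,1,2,\dots)\sim(2,2,2,\dots)$. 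The correct statement---which the paper isolates as Proposition \ref{ryff} and proves by factoring both functions through their common decreasing rearrangement---is that equimeasurable $f,g$ tending to $0$ are linked by a bijection $m:\mathrm{supp}(f)\to\mathrm{supp}(g)$ with $g\circ m=f$ on $\mathrm{supp}(f)$; one then uses $1$-unconditionality to discard the zero coordinates and applies $1$-symmetry in the form $\|\sum_{i\in\mathrm{supp}(f)}|g(m(i))|e_i\|_X=\|\sum_{i\in\mathrm{supp}(g)}|g(i)|e_i\|_X$. (The decay hypothesis, which rules out the infinite-multiplicity pathology, comes from membership in $X$.) So your forward direction needs this repair: replace ``permutation of $\mathbb{N}$'' by ``bijection of supports,'' and route the norm identity through the supports rather than through all of $\mathbb{N}$.
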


Next, we give a result which in some sense justifies our definition of subrearrangement-invariance.

\begin{proposition}\label{1-subsymmetric}
A 1-unconditional basis $(e_i)_{i=1}^\infty$ for a real Banach space $X$ is 1-subsymmetric if and only if $X$ is subrearrangement-invariant as a function space on $\mathbb{N}$ with respect to $(e_i)_{i=1}^\infty$.  It is subsymmetric if and only if $X$ is essentially subrearrangement-invariant.
\end{proposition}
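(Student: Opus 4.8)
The plan is to prove the first biconditional by translating the measure-theoretic condition into the coordinate language of $(e_i)_{i=1}^\infty$, exactly as in Proposition~\ref{1-symmetric}, and then to deduce the second biconditional from the first together with the standard supremum renorming that turns a subsymmetric basis into a $1$-subsymmetric one.

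For the first biconditional I would begin by identifying $\mathbb{MO}(\mathbb{N},F)$ when $\mathbb{N}$ carries the counting measure. Every subset is measurable and every bijection is measure-preserving along with its inverse, so $m\in\mathbb{MO}(\mathbb{N},F)$ exactly when $m$ is a strictly increasing bijection onto $F$; this requires $F$ to be infinite, and then $m$ is forced to be the increasing enumeration $m(i)=n_i$ of $F=\{n_1<n_2<\cdots\}$ (finite $F$ give $\mathbb{MO}(\mathbb{N},F)=\emptyset$ and impose nothing). Writing $a_i=f(n_i)$, the function $f\circ m$ corresponds to $\sum_i a_i e_i$ and $f\mathbf{1}_F$ to $\sum_i a_i e_{n_i}$; since any vector supported on $F$ has the form $f\mathbf{1}_F$ with $f\in X$, the subrearrangement-invariance of $X$ is \emph{equivalent} to the ``compression'' identity $\|\sum_i a_i e_{n_i}\|_X=\|\sum_i a_i e_i\|_X$ holding whenever $\sum_i a_i e_{n_i}\in X$. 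This is one of the two statements packaged in $1$-subsymmetry; to recover the ``spreading'' direction (that $\sum_i a_i e_i\in X$ forces $\sum_i a_i e_{n_i}\in X$ with equal norm) I would pad $F$ to an infinite set whose first $N$ elements are $n_1,\dots,n_N$, apply the compression identity to the finite vector $\sum_{i=1}^N a_i e_{n_i}$ to obtain $\|\sum_{i=1}^N a_i e_{n_i}\|_X=\|\sum_{i=1}^N a_i e_i\|_X$ for every $N$, and then pass to the limit, the partial sums of $\sum_i a_i e_{n_i}$ being Cauchy because their increments match those of $\sum_i a_i e_i$ in norm. Finally, $1$-unconditionality absorbs the signs $\epsilon_i=\pm1$, yielding full $1$-subsymmetry, while the converse implication gives the compression identity trivially.

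For the second biconditional I would argue both directions through the first. If $(e_i)$ is subsymmetric, put $|||x|||=\sup\|\sum_i\epsilon_i a_i e_{n_i}\|_X$, the supremum over all strictly increasing $(n_i)$ and all signs $(\epsilon_i)$, where $x=\sum_i a_i e_i$. The subsymmetry constant makes this an equivalent norm, it is manifestly $1$-unconditional and $1$-subsymmetric, and because it is $1$-unconditional it depends only on $(|a_i|)$ and hence is an equivalent function norm; the first biconditional then renders the renormed space subrearrangement-invariant, so $X$ is essentially subrearrangement-invariant. Conversely, if $X$ admits an equivalent subrearrangement-invariant function norm $\rho'$, then $(e_i)$ remains a $1$-unconditional basis for $(X,\rho')$, is $1$-subsymmetric there by the first biconditional, and is therefore subsymmetric under the original (equivalent) norm.

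The main obstacle is the second biconditional, where one must be careful that the two notions of ``equivalent norm'' match up: the renorming witnessing subsymmetry has to be realizable as a function norm, which is precisely what the $1$-unconditionality of the supremum norm guarantees, and conversely the function-norm hypothesis built into ``essentially subrearrangement-invariant'' must be strong enough to return a genuinely $1$-unconditional equivalent norm to feed back into the first biconditional. A secondary technical point is the truncation-and-limit argument needed to extract the spreading direction of $1$-subsymmetry from the compression identity, since the measure-theoretic condition as stated delivers only the latter directly.
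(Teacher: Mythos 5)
Your proposal is correct and follows essentially the same route as the paper: identify $\mathbb{MO}(\mathbb{N},F)$ with the increasing enumerations of infinite subsets $F$, translate subrearrangement-invariance into the identity $\left\|\sum_i a_i e_{n_i}\right\|_X=\left\|\sum_i a_i e_i\right\|_X$, and deduce the second biconditional from the standard supremum renorming $|||x|||=\sup\|\sum_k x_k^*(x)x_{n_k}\|$. Your truncation-and-limit argument for the ``spreading'' direction merely makes explicit a convergence point that the paper's ($\Leftarrow$) implication passes over when it asserts $g\in X$.
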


\begin{proof}
($\Rightarrow$):   Suppose $(e_i)_{i=1}^\infty$ is 1-subsymmetric.  Let $F\subseteq\mathbb{N}$ and $m\in\mathbb{MO}(\mathbb{N},F)$, and select any $f\in X$.  By 1-subsymmetry of $(e_i)_{i=1}^\infty$ we have
\begin{multline*}
\|f\circ m\|_X
=\left\|\sum_{i=1}^\infty(f)(m(i))e_i\right\|_X
=\left\|\sum_{i\in F}f(i)e_{m^{-1}(i)}\right\|_X
\\=\left\|\sum_{i\in F}f(i)\textbf{1}_F(i)e_{m^{-1}(i)}\right\|_X
=\left\|\sum_{i=1}^\infty f(i)\textbf{1}_F(i)e_i\right\|_X
=\|f\textbf{1}_F\|_X.
\end{multline*}
Hence, $X$ is subrearrangement-invariant with respect to $(e_i)_{i=1}^\infty$.

($\Leftarrow$):  Suppose now that $X$ is subrearrangement-invariant with respect to $(e_i)_{i=1}^\infty$.  Let $(e_{i_k})_{k=1}^\infty$ be a subsequence and $f\in X$.  Define $m(k)=i_k$ for $k\in\mathbb{N}$, and $F:=(i_k)_{k=1}^\infty$.  Clearly, $m\in\mathbb{MO}(\mathbb{N},F)$.  Define $g:\mathbb{N}\to[0,\infty]$ by letting $g(i)=(|f|\circ m^{-1})(i)$ if $i\in F$ and $g(i)=0$ otherwise.  We will need to check that $g\in X$, but this follows from the facts below, together with the identity $g\boldsymbol{1}_F=g$.  Now, by subrearrangement-invariance and 1-unconditionality we have
\begin{multline*}
\left\|\sum_{k=1}^\infty f(k)e_{i_k}\right\|_X
=\left\|\sum_{k=1}^\infty(f\circ m^{-1})(i_k)e_{i_k}\right\|_X
=\left\|\sum_{i\in F}(f\circ m^{-1})(i)e_{i}\right\|_X
\\=\left\|\sum_{i=1}^\infty g(i)\textbf{1}_F(i)e_{i}\right\|_X
=\|g\textbf{1}_F\|_X
=\|g\circ m\|_X
\\=\|f\circ m^{-1}\circ m\|_X
=\|f\|_X
=\left\|\sum_{i=1}^\infty f(i)e_i\right\|_X.
\end{multline*}

That $(e_n)_{n=1}^\infty$ is subsymmetric if and only if it is essentially subrearrangement-invariant follows easily by considering the equivalent norm $|||x|||=\sup_{(n_k)\in\mathbb{N}^\uparrow}\|\sum_{k=1}^\infty x_k^*(x)x_{n_k}\|$.
\end{proof}

It is well-known that every 1-symmetric basis is 1-subsymmetric.  Similarly, it is easy to show that rearrangement-invariance implies subrearrangement-invariance.  We just need a quick preliminary fact before we do.

\begin{proposition}[{\cite[Proposition 2.7.2]{BS88}}]\label{mpt-equimeasurable}
Let $m:E\to F$ be a measure preserving transformation between $\sigma$-finite measure spaces $(E,\mu_E)$ and $(F,\mu_F)$.  If $f:F\to[0,\infty]$ is a $(\mu_F,\beta)$-measurable function on $F$, then $f\circ m:E\to[0,\infty]$ is a $(\mu_E,\beta)$-measurable function on $E$, and $f$ and $f\circ m$ are equimeasurable.
\end{proposition}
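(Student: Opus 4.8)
The plan is to establish the two assertions separately: first that $f\circ m$ is $(\mu_E,\beta)$-measurable, and then that it is equimeasurable with $f$. Both reduce to the elementary set-theoretic identity $(f\circ m)^{-1}(B)=m^{-1}(f^{-1}(B))$, combined with the hypotheses on $m$. The measurability will follow from the measurability of $m$ alone, whereas equimeasurability requires the stronger measure-preserving property.

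For measurability, I would fix a Borel subset $B\subseteq[0,\infty]$ and note that, since $f$ is $(\mu_F,\beta)$-measurable, the set $f^{-1}(B)$ is $\mu_F$-measurable in $F$. The measurability of $m$ then guarantees that $m^{-1}(f^{-1}(B))=(f\circ m)^{-1}(B)$ is $\mu_E$-measurable in $E$. As $B$ was an arbitrary Borel set, this shows $f\circ m$ is $(\mu_E,\beta)$-measurable.

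For equimeasurability, I would compute the distribution functions directly. Fixing $s\in[0,\infty]$, set $A_s=f^{-1}((s,\infty])=\{y\in F:f(y)>s\}$, which is $\mu_F$-measurable by the first part. Since $f,f\circ m\geq 0$, the level set for $f\circ m$ satisfies $\{x\in E:(f\circ m)(x)>s\}=m^{-1}(A_s)$, whence $\text{dist}_{f\circ m}(s)=\mu_E(m^{-1}(A_s))$. Invoking the measure-preserving property of $m$ gives $\mu_E(m^{-1}(A_s))=\mu_F(A_s)=\text{dist}_f(s)$. Since $s$ was arbitrary, $\text{dist}_{f\circ m}=\text{dist}_f$, i.e.\ $f\circ m\sim f$.

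The argument is essentially routine, so I do not expect a genuine obstacle. The one point requiring care is the logical ordering: the measure-preserving identity $\mu_E(m^{-1}(A_s))=\mu_F(A_s)$ may only be invoked once $A_s$ is known to be $\mu_F$-measurable, which is precisely why the measurability portion must be settled first and then reused to license the application of the measure-preserving hypothesis to each level set.
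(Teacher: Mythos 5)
Your argument is correct and is the standard one: the paper itself gives no proof of this proposition, citing it directly to \cite[Proposition 2.7.2]{BS88}, and your reduction of both claims to the identity $(f\circ m)^{-1}(B)=m^{-1}(f^{-1}(B))$ applied to the level sets $A_s=f^{-1}((s,\infty])$ is exactly the expected argument. Your closing remark about the logical ordering (measurability of $A_s$ must be established before the measure-preserving hypothesis can be applied to it) is the right point of care, and nothing is missing.
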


\begin{proposition}\label{ri-is-sri}
Let $(\Omega,\mu)$ be a totally-ordered $\sigma$-finite measure space satisfying $\mu(\Omega)=\infty$.  If $X$ is a rearrangement-invariant function space on $\Omega$, then it is also subrearrangement-invariant.
\end{proposition}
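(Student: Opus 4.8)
The plan is to deduce subrearrangement-invariance from the rearrangement-invariance hypothesis by showing that, for a fixed $f\in X$, measurable $F\subseteq\Omega$, and $m\in\mathbb{MO}(\Omega,F)$, the functions $f\circ m$ and $f\mathbf{1}_F$ are equimeasurable as functions on $\Omega$. Granting this, the defining property of rearrangement-invariance yields $\|f\circ m\|_X=\|f\mathbf{1}_F\|_X$ at once, which is precisely what subrearrangement-invariance asks for. I note in advance that, although the elements of $\mathbb{MO}(\Omega,F)$ are required to be strictly increasing, this monotonicity is irrelevant in the present direction: the only feature of $m$ I shall exploit is that it is measure-preserving.

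The core computation is to check that the two distribution functions agree. For each $s\in[0,\infty]$ put $A_s=\{y\in F:|f(y)|>s\}$. Because $m$ maps $\Omega$ onto $F$, the set on which $|f\circ m|$ exceeds $s$ is exactly $m^{-1}(A_s)$, so measure-preservation gives $\text{dist}_{f\circ m}(s)=\mu(m^{-1}(A_s))=\mu(A_s)$. On the other side, for $s\geq 0$ the product $f\mathbf{1}_F$ vanishes off $F$ and coincides with $f$ on $F$, so the set on which $|f\mathbf{1}_F|$ exceeds $s$ is again $A_s$, whence $\text{dist}_{f\mathbf{1}_F}(s)=\mu(A_s)$. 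Thus the distribution functions coincide throughout $[0,\infty]$ and $f\circ m\sim f\mathbf{1}_F$. Equivalently, one could obtain this by applying Proposition \ref{mpt-equimeasurable} to the measure-preserving map $m\colon\Omega\to F$ together with the restriction of $|f|$ to $F$, and then comparing its distribution on $F$ with that of $f\mathbf{1}_F$ on $\Omega$.

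The step I expect to require the most care is the interface with the stated definition of rearrangement-invariance. That definition is phrased for a pair $f,g\in X$, whereas the function norm $\rho$ is not assumed to be monotone, so a priori I cannot be sure that $f\circ m$ and $f\mathbf{1}_F$ lie in $X$ rather than having infinite norm. I would resolve this by reading rearrangement-invariance at the level of the underlying function norm on the cone $\mathcal{M}_0^+(\Omega)$, that is, as the assertion that $\rho(u)=\rho(v)$ in $[0,\infty]$ whenever $u\sim v$; applied to $u=|f\circ m|$ and $v=|f\mathbf{1}_F|$ this both gives the desired norm identity and shows that the two functions belong to $X$ simultaneously. Apart from this bookkeeping, the argument is just the distributional computation above, so no further obstacle is anticipated.
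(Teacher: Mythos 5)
Your proof is correct and follows essentially the same route as the paper's: both arguments reduce the claim to showing that $f\circ m$ and $f\mathbf{1}_F$ are equimeasurable and then invoke rearrangement-invariance, the only difference being that the paper cites Proposition \ref{mpt-equimeasurable} (via the intermediate function $f|_F$) where you verify the equality of distribution functions by hand. Your remark about reading the hypothesis at the level of the function norm so that $f\circ m$ and $f\mathbf{1}_F$ are guaranteed to lie in $X$ is a reasonable piece of bookkeeping that the paper passes over silently.
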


\begin{proof}
Select any $f\in X$, measurable $F\subseteq\Omega$, and $m\in\mathbb{MO}(\Omega,F)$.   Notice that $f|_F\circ m=f\circ m$ so that, by Proposition \ref{mpt-equimeasurable}, $f\circ m\sim f|_F$.  We also clearly have $f\textbf{1}_F\sim f|_F$, and hence $f\circ m\sim f\textbf{1}_F$.  By rearrangment-invariance this means $\|f\circ m\|_X=\|f\textbf{1}_F\|_X$.
\end{proof}

Let us close this section by discussing the nontriviality of essential-subrearrangement invariance.  There are, after all, well-known examples of 1-unconditional bases which are not subsymmetric under any renorming, for instance the basis for the Tsirelson space.  This furnishes us with examples of function spaces on $\mathbb{N}$ which are not essentially subrearrangement-invariant.  The following example---a simple modification of the Schreier sequence space---gives us a function space on the purely nonatomic measure space $(0,\infty)$ which fails to be essentially subrearrangement-invariant.

\begin{example}
Denote by $\mathcal{A}$ the family of all subsets $A$ of $(0,\infty)$ satisfying $\lambda(A)\leq\sqrt{\inf A}$.  For a nonnegative $(\lambda,\beta)$-measurable function $f:(0,\infty)\to[0,\infty]$, we set
$$\rho_Y(f)=\sup_{A\in\mathcal{A}}\int_Af(t)\;dt.$$
Then $\rho_Y$ is a function norm, and we can denote by $Y$ the function space it generates.
Furthermore, $Y$ is a Banach space which fails to be essentially subrearrangement-invariant.
\end{example}

\begin{proof}
That $\rho_Y$ is a function norm is clear from the definition, and it's routine (via an argument such as in Proposition \ref{is-banach}) to show completeness.  So we need only prove that it fails to be essentially subrearrangement-invariant.  Select any $b\in(0,\infty)$.  When selecting $A\in\mathcal{A}$ to estimate $\|\boldsymbol{1}_{(0,b]}\|_Y$, we may assume without loss of generality that $\inf A\leq b$, else $\int_A\boldsymbol{1}_{(0,b]}(t)\;dt=0$.  Hence,
$$
\int_A\boldsymbol{1}_{(0,b]}(t)\;dt
\leq\lambda(A)
\leq\sqrt{\inf A}
\leq\sqrt{b}
$$
so that $\|\boldsymbol{1}_{(0,b]}\|_Y\leq\sqrt{b}$.  On the other hand, if $c\geq\sqrt{b}$ then
$
\|\boldsymbol{1}_{(c,c+b]}\|_Y
=b.
$
It is clear that $\boldsymbol{1}_{(c,c+b]}\circ m=\boldsymbol{1}_{(0,b]}$ for the shift map $m\in\mathbb{MO}((0,\infty),(c,\infty))$ defined by $m(t)=t+c$.  Hence,
$$
\frac{\|\boldsymbol{1}_{(c,c+b]}\|_Y}{\|\boldsymbol{1}_{(c,c+b]}\circ m\|_Y}
=\frac{\|\boldsymbol{1}_{(c,c+b]}\|_Y}{\|\boldsymbol{1}_{(0,b]}\|_Y}
\geq\frac{b}{\sqrt{b}}=\sqrt{b}.
$$
As $b\in(0,\infty)$ was arbitrary, it follows that $Y$ is not essentially subrearrangement-invariant.
\end{proof}

\section{Garling function spaces}

The converse of Proposition \ref{ri-is-sri} fails to hold in general, as can be seen from the following example.  If $1\leq p<\infty$ and $w=(w(k))_{k=1}^\infty$ is a nonincreasing sequence of positive real numbers satisfying $w\in c_0\setminus\ell_1$, then we can define the Garling sequence space $g(w,p)$ as the space of all scalar sequences $f:\mathbb{N}\to[-\infty,\infty]$ satisfying
$$
\|f\|_g:=\sup_{(i(k))_{k=1}^\infty\in\mathbb{N}^\uparrow}\left(\sum_{k=1}^\infty|f(i(k))|^pw(k)\right)^{1/p}
<\infty,
$$
where $\mathbb{N}^\uparrow$ denotes the family of all increasing sequences in $\mathbb{N}$.  (We usually also impose the condition that $w(1)=1$ but this is not always necessary.)  It is known from \cite[Proposition 2.4]{AAW18} and \cite[Lemma 3.1]{AALW18} that the unit vectors in $g(w,p)$ form a 1-unconditional basis which is 1-subsymmetric  but not symmetric.  In particular, thusly viewed as a function space on $\mathbb{N}$, by Propositions \ref{1-symmetric} and \ref{1-subsymmetric}, it is subrearrangement-invariant but fails to be rearrangement-invariant, or even just essentially rearrangement-invariant.  Nevertheless, it remains to be seen whether essential subrearrangement-invariance is a strictly weaker condition than essential rearrangement-invariance in the nonatomic setting. We devote this section, therefore, to exhibiting a function space on $(0,\infty)$ which is subrearrangement-invariant but fails to be essentially rearrangement-invariant.

To accomplish this, we shall simply generalize Garling's construction.  In fact, we will use the very same ``split into two sums'' trick that Garling did in his original paper \cite[\S5]{Ga68}.  However, in order for this strategy to work, we need to make some adaptations.  Part of that will involve the using the measure-theoretic results from \S2 of the present paper.  Also, we need to characterize Garling sequence spaces slightly differently.

\begin{proposition}
Fix a nonincreasing function $w:\mathbb{N}\to(0,\infty)$ with $w\in c_0\setminus\ell_1$.  For each function $f:\mathbb{N}\to[0,\infty]$ we define
$$\rho_g(f)=\sup_{\substack{E,F\subseteq\mathbb{N}\\m\in\mathbb{MO}(E,F)}}\left(\sum_{k\in E}(f\circ m)(k)^pw(k)\right)^{1/p}.$$
Then $\rho_g$ is a function norm generating the space $g(w,p)$.
\end{proposition}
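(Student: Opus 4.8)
The plan is to prove the stronger statement that $\rho_g$ coincides \emph{exactly} with the classical Garling norm $\|\cdot\|_g$ on every $f:\mathbb{N}\to[0,\infty]$; this at once shows that $\rho_g$ is a function norm generating $g(w,p)$. The first step is bookkeeping about $\mathbb{MO}$ under the counting measure: for $E,F\subseteq\mathbb{N}$ the class $\mathbb{MO}(E,F)$ is nonempty precisely when $E$ and $F$ have the same cardinality, in which case it consists of the single order-isomorphism $m$ sending the $j$-th smallest element $a_j$ of $E$ to the $j$-th smallest element $b_j$ of $F$. Writing out a typical summand then gives
$$\sum_{k\in E}(f\circ m)(k)^p w(k)=\sum_j f(b_j)^p\,w(a_j),$$
so the $\rho_g$-supremum amounts to selecting an increasing list of values $f(b_1),f(b_2),\dots$ (through the choice of $F$) together with an independent increasing list of weight-indices $a_1<a_2<\cdots$ (through the choice of $E$), paired in order.

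Before the identity I would dispose of the function-norm axioms (i)--(iii), all of which are routine. Subadditivity follows by applying Minkowski's inequality in weighted $\ell_p$ (here $p\geq 1$) to each fixed triple $(E,F,m)$ and then passing to the supremum; positive homogeneity is immediate upon factoring $a^p$ out of each inner sum; and nondegeneracy follows by taking $E=F=\{n\}$ with $m$ the identity, which shows $\rho_g(f)\geq f(n)\,w(n)^{1/p}$ for every $n$, so that $\rho_g(f)=0$ forces $f\equiv 0$.

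For the identity itself I would establish both inequalities. For $\rho_g(f)\geq\|f\|_g$, given any $(i(k))_{k=1}^\infty\in\mathbb{N}^\uparrow$ I take $E=\mathbb{N}$, $F=\{i(k):k\geq 1\}$, and $m(k)=i(k)$; the corresponding $\rho_g$-summand is then exactly the classical summand $\sum_k f(i(k))^p w(k)$, so the classical supremum is dominated by $\rho_g(f)^p$. For the reverse inequality, given any admissible $(E,F,m)$ I use the rewriting above together with the elementary fact that the $j$-th smallest element of a subset of $\mathbb{N}$ satisfies $a_j\geq j$; since $w$ is nonincreasing this yields $w(a_j)\leq w(j)$ and hence
$$\sum_j f(b_j)^p w(a_j)\leq\sum_j f(b_j)^p w(j),$$
which is a classical Garling sum along the increasing sequence $(b_j)$ and is therefore at most $\|f\|_g^p$.

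The single point needing a little care --- and the only thing I expect to be an obstacle --- is the bookkeeping when $F$ (equivalently $E$) is finite: there the inner sum is finite, and one recovers domination by $\|f\|_g^p$ by extending $(b_j)$ to an infinite increasing sequence, appending arbitrary larger indices, which only adds nonnegative terms. Conceptually the crux is the monotonicity observation $w(a_j)\leq w(j)$: it expresses that the extra freedom in $\rho_g$ to choose the weight-index set $E$ never helps, since a nonincreasing weight is always maximized by using the initial segment $E=\{1,2,\dots\}$, so $\rho_g$ collapses back to the order-constrained Garling supremum $\|\cdot\|_g$.
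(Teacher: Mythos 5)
Your proof is correct and follows essentially the same route as the paper: the lower bound via $E=\mathbb{N}$, $F=\{i(k)\}$, and the upper bound by enumerating $E$ in increasing order (your $a_j\geq j$, hence $w(a_j)\leq w(j)$, is exactly the paper's composition with the unique $n\in\mathbb{MO}(\mathbb{N},E)$ and the observation $w(n(j))\leq w(j)$). The only difference is that you spell out the function-norm axioms and the finite-$E$ case, which the paper dismisses as immediate.
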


\begin{proof}
Let $(i(k))_{k=1}^\infty\in\mathbb{N}^\uparrow$.  By taking $E=\mathbb{N}$, $F=(i(k))_{k=1}^\infty$, and $m(k)=i(k)$, it is clear that $\rho_g(f)\geq\|f\|_g$.  For the reverse inequality, let $E,F\subseteq\mathbb{N}$ and $m\in\mathbb{MO}(E,F)$.  We may assume without loss of generality that $E$ and $F$ are both infinite.  Thus, there is a unique $n\in\mathbb{MO}(\mathbb{N},E)$, and this satisfies $m\circ n\in\mathbb{MO}(\mathbb{N},F)$.  Since $w$ is nonincreasing, we have
\begin{multline*}
\sum_{k\in E}(f\circ m)(k)^pw(k)
=\sum_{j=1}^\infty(f\circ m\circ n)(j)^pw(n(j))
\\\leq\sum_{j=1}^\infty(f\circ m\circ n)(j)^pw(j)
\leq\|f\|_g^p.
\end{multline*}
  That $\rho_g$ is a function norm generating $g(w,p)$ follows immediately.
\end{proof}

\begin{definition}
Let $\mathbb{W}$ denote the set of all nonincreasing $(\lambda,\beta)$-measurable functions $W:(0,\infty)\to(0,\infty)$ satisfying the following conditions:
\begin{itemize}\item[(W1)]  $\displaystyle\lim_{t\to\infty}W(t)=0$,

\vspace{0.1cm}

\item[(W2)]  $\int_0^\infty W(t)\;dt=\infty$, and

\vspace{0.2cm}

\item[(W3)]  $\int_0^1W(t)\;dt<\infty$.
\end{itemize}
We also denote by $\lambda$ the Lebesgue measure and $\Lambda$ the family of Lebesgue-measurable subsets of $(0,\infty)$.  (Recall that $\beta$ is the Borel measure.)  For each $(\lambda,\beta)$-measurable $f:(0,\infty)\to[0,\infty]$, set
$$
\rho_G(f)=\sup_{\substack{E,F\in\Lambda\\m\in\mathbb{MO}(E,F)}}\left(\int_E(f\circ m)(t)^pW(t)\;dt\right)^{1/p},
$$
where $W\in\mathbb{W}$ and $1\leq p<\infty$.  We then define a {\bf Garling function space}, denoted $G_{W,p}(0,\infty)$, as the space of all a.e.-equivalence classes of measurable functions $f:(0,\infty)\to[-\infty,\infty]$ satisfying $\|f\|_G:=\rho_G(|f|)<\infty$.
\end{definition}

\begin{remark}
Conditions (W1) and (W2) are the only ones we use in \S3 and the proof of Proposition \ref{is-banach}.  However, for the other results in \S4, condition (W3) is needed.
\end{remark}

\noindent It is clear that $\rho_G$ is a function norm, and hence $G_{W,p}(0,\infty)$ is a function space on $(0,\infty)$.  We will show later in \S4 that it is in fact a Banach space, i.e.\ that it is complete.

\begin{proposition}
Fix $1\leq p<\infty$, and let $W\in\mathbb{W}$.  Then $G_{W,p}(0,\infty)$ is subrearrangement-invariant.
\end{proposition}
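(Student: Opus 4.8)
The plan is to prove the two inequalities $\|f\circ n\|_G\leq\|f\mathbf{1}_A\|_G$ and $\|f\mathbf{1}_A\|_G\leq\|f\circ n\|_G$ separately, where I write $A\subseteq(0,\infty)$ for the given measurable set and $n\in\mathbb{MO}((0,\infty),A)$ for the given $\mathbb{MO}$-isomorphism (reserving the letters $E,F,m$ for the dummy variables appearing in the definition of $\rho_G$), with $f\in G_{W,p}(0,\infty)$ arbitrary. Both bounds rest on the same mechanism: a generic term in the supremum defining one norm can be re-expressed, after composing or restricting the relevant $\mathbb{MO}$-isomorphism, as a term in the supremum defining the other norm.

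For $\|f\circ n\|_G\leq\|f\mathbf{1}_A\|_G$, I would take an arbitrary pair $E,F\in\Lambda$ and $m\in\mathbb{MO}(E,F)$ and examine $\int_E(|f\circ n|\circ m)^pW\,dt$. Writing $|f\circ n|\circ m=|f|\circ(n\circ m)$, the key observation is that $n\circ m$ is again an $\mathbb{MO}$-isomorphism, this time from $E$ onto the measurable set $n(F)\subseteq A$. Since its range lies inside $A$, the indicator is identically $1$ along it, so $|f|\circ(n\circ m)=(|f|\mathbf{1}_A)\circ(n\circ m)$, and the integral is exactly one of the terms in the supremum for $\rho_G(|f|\mathbf{1}_A)^p$. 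Taking the supremum over $E,F,m$ then gives the inequality.

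For the reverse inequality, I would start from an arbitrary term $\int_E((|f|\mathbf{1}_A)\circ m)^pW\,dt$ with $m\in\mathbb{MO}(E,F)$. Here $(|f|\mathbf{1}_A)\circ m=(|f|\circ m)\,\mathbf{1}_{m^{-1}(A)}$, so the integral only sees the set $E':=E\cap m^{-1}(A)$, on which $m$ restricts to an $\mathbb{MO}$-isomorphism onto $F\cap A\subseteq A$. Precomposing with $n^{-1}$ on this piece, the map $\widetilde{m}:=n^{-1}\circ m|_{E'}$ is an $\mathbb{MO}$-isomorphism from $E'$ onto $n^{-1}(F\cap A)\in\Lambda$, and because $m(E')\subseteq A$ we have $|f|\circ m=(|f|\circ n)\circ\widetilde{m}=|f\circ n|\circ\widetilde{m}$ on $E'$. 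Thus the integral equals $\int_{E'}(|f\circ n|\circ\widetilde{m})^pW\,dt$, a term in the supremum for $\rho_G(|f\circ n|)^p$, and taking the supremum yields $\|f\mathbf{1}_A\|_G\leq\|f\circ n\|_G$.

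The algebra above is not the real content; the main obstacle is the measure-theoretic bookkeeping needed to certify that each manipulation is legitimate, namely that (i) the composition of two $\mathbb{MO}$-isomorphisms is again an $\mathbb{MO}$-isomorphism, (ii) the restriction of an $\mathbb{MO}$-isomorphism to a measurable subset of its domain is an $\mathbb{MO}$-isomorphism onto its (measurable) image, and (iii) the forward image of a measurable set under $n$ (and under $n^{-1}$) is again measurable, so that $n(F)$ and $n^{-1}(F\cap A)$ genuinely lie in $\Lambda$. These are precisely the kind of facts assembled in the appendix, and I would invoke them to guarantee that every object produced above is an admissible competitor in the defining supremum of $\rho_G$.
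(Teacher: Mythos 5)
Your proposal is correct and follows essentially the same route as the paper: both directions are handled by composing the given $\mathbb{MO}$-isomorphism (as $n\circ m$ in one direction and $n^{-1}\circ m$ restricted to $m^{-1}(A)$ in the other) to turn a competitor for one supremum into a competitor for the other, which is exactly the paper's argument up to notation and the choice of bounding a generic term versus a near-optimal one.
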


\begin{proof}
Fix $D\in\Lambda$ and $n\in\mathbb{MO}((0,\infty),D)$, and $f\in G_{W,p}(0,\infty)$.  Observe that there are $E,F\in\Lambda$ and $m\in\mathbb{MO}(E,F)$ such that
\begin{multline*}
\|f\textbf{1}_D\|_G^p
\approx_\epsilon\int_E((f\textbf{1}_D)\circ m)(t)^pW(t)\;dt
=\int_E(f\circ m)(t)^p(\textbf{1}_D\circ m)(t)W(t)\;dt
\\=\int_{m^{-1}(D)\cap E}(f\circ m)(t)^pW(t)\;dt
\leq\int_{m^{-1}(D)}(f\circ m)(t)^pW(t)\;dt
\\=\int_{m^{-1}(D)}((f\circ n)\circ(n^{-1}\circ m))(t)^pW(t)\;dt
\leq\|f\circ n\|_G^p,
\end{multline*}
where the last inequality follows from the fact that $n^{-1}\circ m$ is an $\mathbb{MO}$-isomorphism from $m^{-1}(D)$ onto its image.  On the other hand, there are $A,B\in\Lambda$ and $\ell\in\mathbb{MO}(A,B)$ such that
\begin{multline*}
\|f\circ n\|_G^p
\approx_\epsilon\int_A(f\circ n\circ\ell)(t)^pW(t)\;dt
=\int_A(f\circ n\circ\ell)(t)^p(\textbf{1}_D\circ n\circ\ell)(t)W(t)\;dt
\\=\int_A((f\textbf{1}_D)\circ(n\circ\ell))(t)^pW(t)\;dt
\leq\|f\textbf{1}_D\|_G^p,
\end{multline*}
where the first equality follows due to the fact that $\textbf{1}_D\circ n\circ\ell$ is the identity function on $A$, and the final inequality follows from the fact that $n\circ\ell$ is an $\mathbb{MO}$-isomorphism from $A$ onto its image.
\end{proof}

To show that a Garling function space fails to admit an equivalent rearrangement-invariant norm, we need the following intuitively obvious lemma.

\begin{lemma}\label{technical}
Fix $p\in[1,\infty)$ and $r\in(0,\infty)$.  Let $W\in\mathbb{W}$ and $f:(0,\infty)\to[0,\infty]$ a measurable function which is nondecreasing on $(0,r)$ and zero everywhere else.  Then there is $s\in[0,r]$ so that
$$
\|f\|_G=\left(\int_0^sf(t+r-s)^pW(t)\;dt\right)^{1/p}
$$
\end{lemma}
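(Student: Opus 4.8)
The plan is to prove $\|f\|_G^p=\sup_{s\in[0,r]}\Phi(s)$ for $\Phi(s):=\int_0^s f(t+r-s)^pW(t)\,dt$, and then that this supremum is attained. The inequality $\|f\|_G^p\ge\sup_s\Phi(s)$ is the easy half: for each $s\in[0,r]$ the translation $m(t)=t+r-s$ is an $\mathbb{MO}$-isomorphism from $E=(0,s)$ onto $F=(r-s,r)$, so the triple $(E,F,m)$ is admissible in the supremum defining $\rho_G$ and contributes exactly $\Phi(s)$.

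For the reverse inequality, fix admissible $E,F\in\Lambda$ and $m\in\mathbb{MO}(E,F)$. Since $f$ vanishes off $(0,r)$, replacing $F$ by $F\cap\{f>0\}$ and $E$ by its $m$-preimage leaves the integral $\int_E(f\circ m)^pW$ unchanged, so I may assume $F\subseteq(0,r)$ and set $L:=\lambda(F)=\lambda(E)\le r$. The key structural observation is that, because $m$ is strictly increasing and $f$ is nondecreasing on $(0,r)\supseteq F$, the function $h:=f\circ m$ is \emph{nondecreasing} on $E$; consequently each superlevel set $E_\sigma:=\{t\in E:h(t)^p>\sigma\}$ is an upper portion of $E$, and by Proposition \ref{mpt-equimeasurable} it has measure $\phi(\sigma):=\lambda\{x\in F:f(x)^p>\sigma\}$. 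An upper portion of $E$ of measure $\phi(\sigma)$ is forced to lie inside $(L-\phi(\sigma),\infty)$ (its complement in $E$ is a lower portion of measure $L-\phi(\sigma)$), so the fact that $W$ is nonincreasing yields $\int_{E_\sigma}W\le\int_{L-\phi(\sigma)}^{L}W$. Inserting this into the layer-cake identity $\int_E h^pW=\int_0^\infty\big(\int_{E_\sigma}W\big)\,d\sigma$ gives $\int_E h^pW\le\int_0^\infty\big(\int_{L-\phi(\sigma)}^{L}W\big)\,d\sigma$.

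It remains to pass from the arbitrary $F$ to the top slice $(r-L,r)$. Writing $\{f^p>\sigma\}\cap(0,r)=(t_\sigma,r)$ (an upper interval, since $f$ is nondecreasing), one sees $\phi(\sigma)=\lambda(F\cap(t_\sigma,r))\le\min(L,r-t_\sigma)$, with equality precisely when $F=(r-L,r)$. As $\int_{L-\phi(\sigma)}^{L}W$ increases with $\phi(\sigma)$, the last displayed bound is maximized by $F=(r-L,r)$, for which $h(t)=f(t+r-L)$ and the bound equals $\Phi(L)$. Hence $\int_E(f\circ m)^pW\le\Phi(L)\le\sup_s\Phi(s)$, completing the reverse inequality.

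Finally, for attainment I would observe that, because $f$ vanishes past $r$, one may write $\Phi(s)=\int_0^\infty f(t+r-s)^pW(t)\,dt$; in the variable $u=r-s$ this is $J(u)=\int_0^\infty f(u+t)^pW(t)\,dt$, where crucially the weight $W$ no longer moves. Monotonicity of $f$ gives, for $u$ near any $u_0$, the integrable dominating function $f(u_0+\delta+t)^pW(t)$, so dominated convergence shows $J$ is continuous on the compact interval $[0,r]$ (and if some $J(u)=\infty$ the supremum is trivially attained there); thus the supremum is attained at some $s\in[0,r]$, which is the asserted equality. The step I expect to require the most care is the rearrangement estimate $\int_{E_\sigma}W\le\int_{L-\phi(\sigma)}^{L}W$: it is exactly the place where the forced monotonicity of $f\circ m$ is used to prevent any configuration from beating the top slice of $f$ pressed against the largest values of $W$ near the origin.
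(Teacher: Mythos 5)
Your two inequalities establishing $\|f\|_G^p=\sup_{s\in[0,r]}\Phi(s)$ are correct, and they take a genuinely different route from the paper. The paper reduces an arbitrary admissible triple $(E,F,m)$ with $F\subseteq(0,r)$ and $b=\lambda(F)$ to the interval $(0,b)$ by invoking Proposition \ref{mo-isomorphism} from its appendix to produce an $\mathbb{MO}$-isomorphism $n:(0,b)\setminus D_0\to E\setminus E_0$, and then proves two pointwise inequalities, $(f\circ m\circ n)(t)^p\leq f(t+r-b)^p$ and $W(n(t))\leq W(t)$. You instead run a layer-cake argument: the superlevel sets of $f\circ m$ are upper portions of $E$, hence forced into $[L-\phi(\sigma),\infty)$, so monotonicity of $W$ gives $\int_{E_\sigma}W\leq\int_{L-\phi(\sigma)}^{L}W$, and comparing $\phi(\sigma)$ with $\min(L,r-t_\sigma)$ identifies $\Phi(L)$ as the extremal value. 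Both arguments exploit the same underlying facts (order- plus measure-preservation pushes mass to the right, where $W$ is small), but yours bypasses the measure-isomorphism machinery of the appendix entirely, which is a genuine simplification.

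The attainment step, however, contains an error as written. The function $f(u_0+\delta+t)^pW(t)$ does not dominate $f(u+t)^pW(t)$ for $u\leq u_0+\delta$, because $f$ is not globally nondecreasing: it drops to zero at $r$. Already for $f=\boldsymbol{1}_{(0,r)}$ and $t\in[r-u_0-\delta,\,r-u)$ your proposed dominant is $0$ while the integrand is $W(t)>0$. When $f$ is essentially bounded the fix is easy: the integrand is supported in $t\in(0,r)$ and bounded there by $\|f\|_\infty^pW(t)\boldsymbol{1}_{(0,r)}(t)$, which is integrable by (W3) and the monotonicity of $W$, so dominated convergence (using that a monotone $f$ is continuous off a countable set) gives continuity of $J$ on $[0,r]$. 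When $f$ is unbounded more care is genuinely needed, and continuity on all of $[0,r]$ can in fact fail: for $W(t)=t^{-1/2}$ and $f(u)=(r-u)^{-1/(2p)}\boldsymbol{1}_{(0,r)}(u)$ one computes $\Phi(s)=\int_0^s(s-t)^{-1/2}t^{-1/2}\,dt=\pi$ for every $s\in(0,r]$ while $\Phi(0)=0$; the supremum is still attained, but not because $\Phi$ is continuous on $[0,r]$. One can salvage the unbounded case by first disposing of the situation where $f^p$ is non-integrable near $r$ (then $\Phi\equiv\infty$ on $(0,r]$ and attainment is trivial). To be fair, the paper's own proof simply asserts continuity of $b\mapsto\int_0^bf(t+r-b)^pW(t)\,dt$ on $[0,r]$ without justification, and the example above shows that assertion is false in general, so this gap is shared; but since you offered an explicit justification, you should know that the particular dominating function you chose does not work.
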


\noindent Unfortunately, it requires a somewhat technical proof.  We begin with some preliminaries.

\begin{proposition}[{\cite[Theorem 2.9.3]{Bo07}}]\label{distribution-integral}
Let $(\Omega,\mu)$ be a measure space and $f:\Omega\to[-\infty,\infty]$ a $(\mu,\beta)$-measurable function.  Then the $\mu$-integrability of $f$ is equivalent to the Lebesgue integrability of the function $t\mapsto\text{dist}_f(t)$, and
$$\int_\Omega|f|\;d\mu=\int_0^\infty\text{dist}_f(t)\;dt.$$
\end{proposition}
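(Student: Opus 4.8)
This is the Cavalieri (``layer cake'') formula, and the plan is to prove it by writing $|f|$ as an integral of indicator functions and then exchanging the order of integration via Tonelli's theorem. Since only $|f|$ appears, I may assume from the outset that $f=g\ge 0$ is nonnegative and $(\mu,\beta)$-measurable. The starting point is the pointwise identity, valid for every $x\in\Omega$,
\[
g(x)=\int_0^{g(x)}dt=\int_0^\infty\mathbf{1}_{\{t<g(x)\}}\,dt=\int_0^\infty\mathbf{1}_{\{g>t\}}(x)\,dt,
\]
which simply expresses a nonnegative (possibly infinite) number as the length of the interval below it.

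Integrating this identity over $\Omega$ and applying Tonelli's theorem to the nonnegative function $(x,t)\mapsto\mathbf{1}_{\{g>t\}}(x)$ on the product space $\Omega\times(0,\infty)$, I would obtain
\[
\int_\Omega g\,d\mu=\int_\Omega\int_0^\infty\mathbf{1}_{\{g>t\}}(x)\,dt\,d\mu(x)=\int_0^\infty\int_\Omega\mathbf{1}_{\{g>t\}}(x)\,d\mu(x)\,dt=\int_0^\infty\mu\{g>t\}\,dt.
\]
The inner integral on the right is exactly $\text{dist}_f(t)=\mu\{|f|>t\}$, so this is the desired formula $\int_\Omega|f|\,d\mu=\int_0^\infty\text{dist}_f(t)\,dt$. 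The integrability equivalence is then automatic: both sides are one and the same element of $[0,\infty]$, so the left side is finite (i.e.\ $f$ is $\mu$-integrable) if and only if the right side is finite (i.e.\ $t\mapsto\text{dist}_f(t)$ is Lebesgue integrable).

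Two measurability points must be checked to license Tonelli's theorem, and the first of these is where I expect the only real work. First, I must verify that the subgraph $S=\{(x,t)\in\Omega\times(0,\infty):g(x)>t\}$ lies in the product $\sigma$-algebra, so that $\mathbf{1}_S$ is jointly measurable; this follows because $(x,t)\mapsto g(x)$ and $(x,t)\mapsto t$ are each product-measurable and $S=\{(x,t):g(x)-t>0\}$ is the preimage of $(0,\infty]$ under their difference. Second, $t\mapsto\text{dist}_f(t)$ is nonincreasing and therefore Borel measurable, so the outer integral on the right is meaningful. With these in hand, Tonelli applies because the integrand is nonnegative and both factors are $\sigma$-finite --- $\mu$ by the standing assumption of the paper and Lebesgue measure on $(0,\infty)$ trivially --- which justifies the single interchange of integrals that is the heart of the argument.
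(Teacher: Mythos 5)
Your argument is correct, but note that the paper does not prove this proposition at all: it is imported verbatim as \cite[Theorem 2.9.3]{Bo07}, so there is no in-paper argument to compare against. What you have written is the standard self-contained ``layer cake'' proof, and all the essential points are in place: the pointwise identity $g(x)=\int_0^\infty\mathbf{1}_{\{g>t\}}(x)\,dt$ (valid also when $g(x)=\infty$), the joint measurability of the subgraph (your difference argument works; one can also write $S=\bigcup_{q\in\mathbb{Q},\,q>0}\{g>q\}\times(0,q)$ to sidestep extended-real arithmetic), and the observation that the integrability equivalence is automatic once the two sides are identified as the same element of $[0,\infty]$. The one point worth flagging is that your route genuinely uses $\sigma$-finiteness of $\mu$ to invoke Tonelli. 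That is licensed here by the paper's standing convention that all measure spaces are $\sigma$-finite, but Bogachev's Theorem 2.9.3 is stated for arbitrary measures, where one would instead argue via monotone convergence on the simple-function truncations $\sum_k \tfrac{1}{n}\mathbf{1}_{\{g>k/n\}}$ (or note that $\{g>0\}$ must be $\sigma$-finite when either side is finite, and that otherwise both sides are $+\infty$). So your proof establishes exactly the version of the statement the paper actually needs, by a slightly less general but cleaner route than the cited source.
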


\begin{corollary}\label{mpt-integral}
If $E$ and $F$ are measurable subsets of $(0,\infty)$, and $f:(0,\infty)\to[0,\infty]$ is a (nonnegative) $(\lambda,\beta)$-measurable function, then for any measure-preserving transformation $m:E\to F$ we have
$$\int_E(f\circ m)(t)\;dt=\int_F f(t)\;dt.$$
\end{corollary}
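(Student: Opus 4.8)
The plan is to reduce the statement of Corollary~\ref{mpt-integral} to the two tools that precede it: Proposition~\ref{mpt-equimeasurable}, which says that $f\circ m$ and $f|_F$ are equimeasurable whenever $m:E\to F$ is measure-preserving, and Proposition~\ref{distribution-integral}, which expresses any integral as the integral of the distribution function. Since both $f\circ m$ and $f$ are nonnegative, taking absolute values is harmless and I can work directly with the functions themselves.

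First I would view the left-hand side as an integral over all of $(0,\infty)$ by regarding $f\circ m$ as a function on $E$ and extending by zero off $E$; equivalently, I integrate the function $g:=f\circ m$ on the measure space $(E,\lambda|_E)$. Applying Proposition~\ref{mpt-equimeasurable} to the measure-preserving transformation $m:E\to F$ gives that $g=f\circ m$ and $f|_F$ are equimeasurable, i.e. $\mathrm{dist}_{f\circ m}=\mathrm{dist}_{f|_F}$ where the distribution functions are computed with respect to $\lambda|_E$ and $\lambda|_F$ respectively. Here $f|_F$ denotes the restriction of $f$ to $F$, and I would note that its distribution function on $(F,\lambda|_F)$ coincides with that of $f\mathbf{1}_F$ on $(0,\infty)$.

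Next I would invoke Proposition~\ref{distribution-integral} twice. On the space $(E,\lambda|_E)$ it yields $\int_E (f\circ m)\,dt=\int_0^\infty\mathrm{dist}_{f\circ m}(s)\,ds$, and on $(F,\lambda|_F)$ it yields $\int_F f\,dt=\int_0^\infty\mathrm{dist}_{f|_F}(s)\,ds$. Because the two distribution functions agree by the previous step, the two right-hand integrals are literally the same, and chaining the equalities gives $\int_E(f\circ m)(t)\,dt=\int_F f(t)\,dt$, which is exactly the claim.

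I do not anticipate a genuine obstacle here, since the corollary is essentially the $p=1$ specialization of the machinery already assembled, and the monotonicity of $t\mapsto\mathrm{dist}_f(t)$ makes the Lebesgue integrability bookkeeping automatic. The one point requiring a little care is the identification of distribution functions across the three ambient measure spaces $(E,\lambda|_E)$, $(F,\lambda|_F)$, and $(0,\infty)$: I must confirm that restricting $f$ to $F$ and then computing its distribution with respect to $\lambda|_F$ produces the same result as computing the distribution of $f\mathbf{1}_F$ with respect to $\lambda$, which holds because the sublevel and superlevel sets involved lie in $F$ up to the zero set where $f\mathbf{1}_F$ vanishes. Once that alignment is checked, the chain of equalities closes immediately.
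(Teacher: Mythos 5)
Your argument is correct and follows essentially the same route as the paper: apply Proposition~\ref{mpt-equimeasurable} to get equimeasurability of $f\circ m$ and $f|_F$, then use Proposition~\ref{distribution-integral} on each side and match the distribution-function integrals. Your extra care in tracking which measure space ($E$, $F$, or $(0,\infty)$) each distribution function lives on is a welcome refinement of the paper's slightly terser bookkeeping, but it is the same proof.
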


\begin{proof}
By Proposition \ref{mpt-equimeasurable}, $f$ and $f\circ m$ are equimeasurable, which is to say that $\text{dist}_f=\text{dist}_{f\circ m}$.  Now by Proposition \ref{distribution-integral} we have
$$\int_E(f\circ m)(t)\;dt=\int_0^\infty\text{dist}_{f\circ m}(t)\;dt
=\int_0^\infty\text{dist}_f(t)\;dt=\int_F f(t)\;dt.$$
\end{proof}

The proof of the following is given in the appendix.

\begin{proposition}\label{mo-isomorphism}
Let $E$ be a measurable subset of $[-\infty,\infty]$ with $\lambda(E)<\infty$.  Then there is a measure-zero subset $E_0$ of $E$, a measure-zero subset $D_0$ of $[0,\lambda(E)]$, and an $\mathbb{MO}$-isomorphism between $E\setminus E_0$ and $[0,\lambda(E)]\setminus D_0$.
\end{proposition}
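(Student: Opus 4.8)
The plan is to realize the desired $\mathbb{MO}$-isomorphism as the cumulative-measure (``CDF-type'') map of $E$, restricted to a co-null subset. Write $L=\lambda(E)$ and define $\phi:[-\infty,\infty]\to[0,L]$ by $\phi(x)=\lambda(E\cap[-\infty,x))$. This $\phi$ is nondecreasing, and since $\lambda$ is nonatomic it is continuous; crucially it is also $1$-Lipschitz, because for $x<y$ we have $\phi(y)-\phi(x)=\lambda(E\cap[x,y))\le y-x$. By continuity and the intermediate value theorem $\phi$ maps onto all of $[0,L]$.

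The first substantive step is to verify that $\phi$ transports $\lambda|_E$ to ordinary Lebesgue measure on $[0,L]$, i.e.\ that $\lambda(E\cap\phi^{-1}(B))=\lambda(B)$ for every measurable $B\subseteq[0,L]$. For $B=[0,a]$ this is immediate from continuity, since $\phi^{-1}([0,a])=[-\infty,t_a]$ with $t_a=\sup\{x:\phi(x)\le a\}$ and $\phi(t_a)=a$, so that $\lambda(E\cap[-\infty,t_a])=\phi(t_a)=a$. Taking differences handles arbitrary subintervals, and a standard uniqueness-of-measures argument (two measures agreeing on the generating $\pi$-system of intervals must coincide on the Borel sets, and by completeness on all Lebesgue sets) then promotes the identity to all measurable $B\subseteq[0,L]$.

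Next I would cut $\phi$ down to a set on which it is injective. Let $E_1$ be the set of points of $E$ that are points of density of $E$; by the Lebesgue density theorem $E_0:=E\setminus E_1$ is null. On $E_1$ the map $\phi$ is strictly increasing: if $x<y$ both lie in $E_1$ then, $x$ having density $1$ in $E$, we get $\lambda(E\cap(x,x+h))>0$ for all sufficiently small $h>0$, hence $\lambda(E\cap(x,y))>0$ and $\phi(y)>\phi(x)$. Put $m:=\phi|_{E_1}$, $D:=m(E_1)$, and $D_0:=[0,L]\setminus D$. Because $E_0$ is null, the transport identity restricts verbatim to $E_1$, so $m$ is a strictly increasing, measure-preserving bijection of $E_1$ onto $D$.

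The hard part is to show that $D_0$ is null and that $m^{-1}$ is measure-preserving as well, and the single observation that makes both routine is that $\phi$ is $1$-Lipschitz. Being Lipschitz, $\phi$ has the Luzin property (N) and (as a continuous map on the compact space $[-\infty,\infty]$) sends $F_\sigma$ sets to $F_\sigma$ sets; hence it carries Lebesgue-measurable sets to Lebesgue-measurable sets. Consequently $D=\phi(E_1)$, and with it $D_0$, is measurable, so the transport identity may be applied to $B=D_0$; but $E_1\cap\phi^{-1}(D_0)=\varnothing$ since $m(E_1)=D$ is disjoint from $D_0$, forcing $\lambda(D_0)=\lambda(E_1\cap\phi^{-1}(D_0))=0$. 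Likewise, for measurable $S\subseteq E_1$ the image $m(S)$ is measurable, so applying the identity to $B=m(S)$ and using injectivity (so that $m^{-1}(m(S))=S$) yields $\lambda(m(S))=\lambda(S)$, which is precisely measure-preservation of $m^{-1}$. Combining these facts, $m$ is a strictly increasing measure-isomorphism, i.e.\ an $\mathbb{MO}$-isomorphism, between $E\setminus E_0=E_1$ and $[0,\lambda(E)]\setminus D_0=D$, as required.
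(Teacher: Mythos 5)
Your proof is correct, and it is built around the same central object as the paper's: the cumulative-measure map $x\mapsto\lambda(E\cap[-\infty,x))$, which is $1$-Lipschitz, order-preserving, and pushes $\lambda|_E$ forward to Lebesgue measure on $[0,\lambda(E)]$. The differences lie in how you execute the individual steps, and they are genuine. Where the paper obtains injectivity by invoking Proposition \ref{bijection-exists} (the fibers of an order-preserving, measure-preserving surjection sit in disjoint intervals, so all but countably many are singletons and one deletes a null set of redundant preimages), you instead restrict to the density points of $E$ and use the Lebesgue density theorem to see that the map is strictly increasing there; both remove a null $E_0$, but yours identifies it intrinsically rather than by a choice of representatives in each fiber. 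Where the paper cites \cite[Proposition 2.7.4]{BS88} (via Lemma \ref{mo-transformation}) for measure-preservation and then appeals to Proposition \ref{bijection-lebesgue} to conclude that the inverse is measurable and measure-preserving, you prove the transport identity directly by a $\pi$-system/uniqueness-of-measures argument and then derive measure-preservation of the inverse from that identity together with injectivity and the Luzin (N)/$F_\sigma$ measurability of forward images. The measurability-of-images step itself (Lipschitz plus inner regularity) is essentially identical in both arguments. What your route buys is self-containedness --- you need neither the external citation nor the two auxiliary propositions from the appendix --- at the modest cost of invoking the Lebesgue density theorem, which is a heavier tool than anything the paper's elementary fiber-selection argument uses.
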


\begin{proof}[Proof of Lemma \ref{technical}]
First, observe that since the map
$$b\mapsto\int_0^bf(t+r-b)^pW(t)\;dt$$
is continuous on the compact set $[0,r]$, we can find $s\in[0,r]$ so that
\begin{equation}\label{4}
\int_0^sf(t+r-s)^pW(t)\;dt=\sup_{b\in[0,r]}\int_0^bf(t+r-b)^pW(t)\;dt.
\end{equation}
Let $E,F\in\Lambda$ and $m\in\mathbb{MO}(E,F)$ be such that
\begin{equation}\label{3}
\|f\|_G^p\approx_\epsilon\int_E(f\circ m)(t)^pW(t)\;dt.
\end{equation}
Without loss of generality we may assume that $F\subseteq(0,r)$, and set $b:=\lambda(F)\leq r$.  By Proposition \ref{mo-isomorphism} we can find measure-zero subsets $E_0$ of $E$ and $D_0$ of $(0,r)$, and an $\mathbb{MO}$-isomorphism
$$n:(0,b)\setminus D_0\to E\setminus E_0.$$

We claim that
\begin{equation}\label{1}(f\circ m\circ n)(t)^p\leq f(t+r-b)^p,\end{equation}
or, equivalently, $b-t\leq r-(m\circ n)(t)$, for each $t\in(0,b)\setminus D_0$.  Indeed, as $m\circ n$ is order-preserving, we have, for $c\in(t,b)\setminus D_0$,
$$(m\circ n)((t,c)\setminus D_0)\subseteq[(m\circ n)(t),(m\circ n)(c)]$$
and since $m\circ n$ is a measure isomorphism from $(0,b)\setminus D_0$ onto its image, we also have
\begin{multline*}
c-t
=\lambda(t,c)
=\lambda((t,c)\setminus D_0)
=\lambda((m\circ n)((t,c)\setminus D_0))
\\\leq\lambda[(m\circ n)(t),(m\circ n)(c)]
=(m\circ n)(c)-(m\circ n)(t).
\end{multline*}
For $\epsilon>0$ we are free to choose $c\in(t,b)\setminus D_0$ so that $b-c<\epsilon$.  Hence,
$$b-t<c-t+\epsilon\leq(m\circ n)(c)-(m\circ n)(t)+\epsilon\leq r-(m\circ n)(t)+\epsilon.$$
As $\epsilon>0$ was arbitrary, this means $b-t\leq r-(m\circ n)(t)$ as desired.

Next we claim that
\begin{equation}\label{2}(W\circ n)(t)\leq W(t),\end{equation}
or, equivalently, $t\leq n(t)$, for each $t\in(0,b)\setminus D_0$.  Indeed, for $\delta\in(0,t)\setminus D_0$ we have
$$n((\delta,t)\setminus D_0)\subseteq[n(\delta),n(t)]$$
and hence
$$t-\delta=\lambda(n((\delta,t)\setminus D_0))\leq\lambda[n(\delta),n(t)]=n(t)-n(\delta)\leq n(t).$$
As $\delta\in(0,t)\setminus D_0$ can be chosen arbitrarily close to zero, this means $t\leq n(t)$ as claimed.

From \eqref{1} and \eqref{2} we obtain that
$$(f\circ m\circ n)(t)^p(W\circ n)(t)\leq f(t-r+b)^pW(t)$$
for all $t\in(0,b)\setminus D_0$, and hence, by the above together with \eqref{4}, \eqref{3}, and Corollary \ref{mpt-integral}, we have
\begin{multline*}
\|f\|_G^p
\approx_\epsilon\int_E(f\circ m)(t)^pW(t)\;dt
=\int_{E\setminus E_0}(f\circ m)(t)^pW(t)\;dt
\\=\int_{(0,b)\setminus D_0}(f\circ m\circ n)(t)^p(W\circ n)(t)\;dt
\leq\int_{(0,b)\setminus D_0}f(t+r-b)^pW(t)\;dt
\\=\int_0^bf(t+r-b)^pW(t)\;dt
\leq\int_0^sf(t+r-s)^pW(t)\;dt
\leq\|f\|_G^p.
\end{multline*}
\end{proof}

We are now set to prove the main result of this section.

\begin{theorem}
If $W(t)=(t+1)^{-1/2}$ then $G_{W,1}(0,\infty)$ fails to admit an equivalent rearrangement-invariant norm.
\end{theorem}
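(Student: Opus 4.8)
The plan is to exploit the standard renorming obstruction for essential rearrangement-invariance. If $G_{W,1}(0,\infty)$ admitted an equivalent rearrangement-invariant norm $|||\cdot|||$, say $C^{-1}\|\cdot\|_G\leq|||\cdot|||\leq C\|\cdot\|_G$, then any two equimeasurable functions $f\sim g$ would satisfy $|||f|||=|||g|||$ and hence $\|f\|_G\leq C|||f|||=C|||g|||\leq C^2\|g\|_G$. Thus it suffices to exhibit a family of equimeasurable pairs $f_R\sim g_R$ with $\|f_R\|_G/\|g_R\|_G\to\infty$. The natural candidates are the decreasing profile $f_R:=W\textbf{1}_{(0,R)}$ and its increasing rearrangement $g_R(t):=W(R-t)\textbf{1}_{(0,R)}(t)$; a one-line change of variables in the distribution function shows $f_R\sim g_R$, and both lie in $G_{W,1}(0,\infty)$.

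First I would bound $\|f_R\|_G$ from below by simply testing $\rho_G$ against $E=F=(0,R)$ and $m=\mathrm{id}$, which gives
$$\|f_R\|_G\geq\int_0^R W(t)^2\,dt=\int_0^R(t+1)^{-1}\,dt=\ln(R+1).$$
For the upper bound on $\|g_R\|_G$ I would invoke Lemma \ref{technical}: since $g_R$ is nondecreasing on $(0,R)$ and vanishes elsewhere, there is $s\in[0,R]$ with
$$\|g_R\|_G=\int_0^s g_R(t+R-s)W(t)\,dt=\int_0^s W(s-t)W(t)\,dt,$$
where the final equality uses $g_R(t+R-s)=W(s-t)$.

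The main obstacle is then to bound the convolution $I(s):=\int_0^s W(s-t)W(t)\,dt$ uniformly in $s$. This is exactly where Garling's ``split into two sums'' device is used: writing $I(s)=2\int_0^{s/2}W(s-t)W(t)\,dt$ by symmetry and estimating $W(s-t)\leq W(s/2)$ on $(0,s/2)$, the monotonicity of $W$ together with $\int_0^{s/2}W(t)\,dt\leq 2\sqrt{s/2+1}$ collapses $I(s)$ to the constant bound $2W(s/2)\cdot 2\sqrt{s/2+1}\leq 4$, independent of $s$ (the underlying reason being that $W*W$ is bounded while $\int W^2$ diverges, a continuous manifestation of the Beta-integral $\int_0^1 u^{-1/2}(1-u)^{-1/2}\,du=\pi$). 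Granting this, $\|g_R\|_G\leq 4$ for every $R$, whereas $\|f_R\|_G\geq\ln(R+1)$, so that $\|f_R\|_G/\|g_R\|_G\geq\tfrac14\ln(R+1)\to\infty$. This contradicts the uniform bound $C^2$ forced by an equivalent rearrangement-invariant norm, and the theorem follows. I expect the convolution estimate to be the only genuinely delicate point; the remainder is bookkeeping with Lemma \ref{technical} and the definition of $\rho_G$.
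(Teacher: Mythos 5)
Your proposal is correct and follows essentially the same route as the paper: the same pair of equimeasurable profiles (the decreasing function $W\boldsymbol{1}_{(0,R)}$ and its increasing reflection), the same logarithmic lower bound from $\int_0^R W(t)^2\,dt$, and the same reduction via Lemma \ref{technical} to the convolution $\int_0^s W(s-t)W(t)\,dt$, which you bound by Garling's split-into-two-pieces device (your symmetry observation just makes the two pieces equal, yielding the same constant $4$). No gaps.
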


\begin{proof}
Fix $r\in(0,\infty)$ and let $f_r:(0,\infty)\to[0,\infty]$ and $f_r^*:(0,\infty)\to[0,\infty]$ be defined by
$$
f_r(t)=\left\{\begin{array}{ll}(r+1-t)^{-1/2}&\text{ if }0<t<r,\\0&\text{ if }r\leq t<\infty\end{array}\right.
$$
and
$$
f_r^*(t)=\left\{\begin{array}{ll}(t+1)^{-1/2}&\text{ if }0<t<r,\\0&\text{ if }r\leq t<\infty.\end{array}\right.
$$
We claim that $f_r$ and $f_r^*$ are equimeasurable.  Indeed, it is clear that $\text{dist}_{f_r}(s)=\text{dist}_{f_r^*}(s)=r$ for all $0\leq s\leq(1+r)^{-1/2}$ and $\text{dist}_{f_r}(s)=\text{dist}_{f_r^*}(s)=0$ for all $1\leq s\leq\infty$.  Now select $(1+r)^{-1/2}<s<1$.  We have $f_r(t)>s$ if and only if both $0<t<r$ and $(r+1-t)^{-1/2}>s$, or, equivalently, $r+1-s^{-2}<t<r$.  In this case we have
$$\text{dist}_{f_r}(s)=\lambda\{t\in(0,\infty):f_r(t)>s\}=\lambda(r+1-s^{-2},r)=s^{-2}-1.$$
Similarly, $f_r^*(t)>s$ if and only if both $0<t<r$ and $(t+1)^{-1/2}>s$, or, equivalently, $0<t<s^{-2}-1$.  This gives us
$$\text{dist}_{f_r^*}(s)=\lambda\{t\in(0,\infty):f_r^*(t)>s\}=\lambda(0,s^{-2}-1)=s^{-2}-1$$
so that $f_r$ and $f_r^*$ are equimeasurable as claimed.

Note that
$$
\|f_r^*\|_G
\geq\int_0^r(t+1)^{-1}\;dt
=\log(r+1)\to\infty
$$
as $r\to\infty$.  Thus, to complete the proof, it is enough to show that $\|f_r\|_G$ is bounded by a number not depending on $r$.

Now we apply Garling's own ``split into two sums'' trick, except in our case the ``sums'' are actually integrals.  Since $f_r$ is increasing on its support $(0,r)$, and $W\in\mathbb{W}$, by Lemma \ref{technical} we must have $s\in[0,r]$ so that
\begin{multline*}
\|f_r\|_G
=\int_0^sf_r(t-s+r)W(t)\;dt.
=\int_0^s(1-t+s)^{-1/2}(t+1)^{-1/2}\;dt
\\=\int_0^{s/2}(1-t+s)^{-1/2}(t+1)^{-1/2}\;dt+\int_{s/2}^s(1-t+s)^{-1/2}(t+1)^{-1/2}\;dt.
\end{multline*}
Hence, it suffices to show that each of these pieces is bounded by a number not depending on $s$.  For the first piece, note that if $t\in(0,s/2]$ then $(1-t+s)^{-1/2}\leq(s/2+1)^{-1/2}$.  Hence,
\begin{multline*}
\int_0^{s/2}(1-t+s)^{-1/2}(t+1)^{-1/2}\;dt
\leq(s/2+1)^{-1/2}\int_0^{s/2}(t+1)^{-1/2}\;dt
\\=(s/2+1)^{-1/2}\cdot 2\left[(s/2+1)^{1/2}-2\right]
\leq 2.
\end{multline*}
For the second piece, note that if $t\in[s/2,s]$ then $(t+1)^{-1/2}\leq(s/2+1)^{-1/2}$, so that
\begin{multline*}
\int_{s/2}^s(1-t+s)^{-1/2}(t+1)^{-1/2}\;dt
\leq(s/2+1)^{-1/2}\int_{s/2}^s(1-t+s)^{-1/2}\;dt
\\=(s/2+1)^{-1/2}\cdot 2\left[(s/2+1)^{1/2}-2\right]
\leq 2.
\end{multline*}
\end{proof}

\section{Geometric properties of Garling function spaces}

In this section we show that Garling function spaces are Banach spaces containing $(1+\epsilon)$-isomorphic copies of $\ell_p$.  As a consequence, the space $G_{W,1}(0,\infty)$ is nonreflexive.  It remains an open question as to whether $G_{W,p}(0,\infty)$ is reflexive when $1<p<\infty$.

\begin{proposition}\label{is-banach}
Fix $1\leq p<\infty$ and $W\in\mathbb{W}$.  Then space $G_{W,p}(0,\infty)$ is complete.
\end{proposition}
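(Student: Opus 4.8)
The plan is to prove completeness via the standard Riesz--Fischer criterion: it suffices to show that every series $\sum_{n=1}^\infty f_n$ in $G_{W,p}(0,\infty)$ with $\sum_{n=1}^\infty\|f_n\|_G<\infty$ converges in norm. I would first isolate two ingredients: (a) a Fatou-type lower semicontinuity of $\rho_G$, and (b) a local $L_p$-bound showing that a finite $G$-norm forces a.e.\ finiteness.

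For (a), I would record that $\rho_G$ is monotone on the cone $\mathcal{M}_0^+(0,\infty)$ (if $0\le a\le b$ a.e.\ then each inner integrand increases, so $\rho_G(a)\le\rho_G(b)$) and that it has the Fatou property: if $0\le h_k\uparrow h$ a.e., then $\rho_G(h_k)\uparrow\rho_G(h)$. The point is that for each fixed $E,F\in\Lambda$ and $m\in\mathbb{MO}(E,F)$ one has $h_k\circ m\uparrow h\circ m$ (the compositions being measurable by Proposition~\ref{mpt-equimeasurable}), so monotone convergence gives $\int_E(h_k\circ m)^pW\uparrow\int_E(h\circ m)^pW$; since the supremum over admissible triples $(E,F,m)$ commutes with the increasing limit in $k$, the claim follows. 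Running the same argument with Fatou's lemma in place of monotone convergence yields $\rho_G(h)\le\liminf_k\rho_G(h_k)$ whenever $h_k\to h$ a.e.\ with all $h_k\ge 0$.

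For (b), taking $E=F=(0,\infty)$ and $m$ the identity in the defining supremum gives $\rho_G(f)^p\ge\int_0^\infty f^pW$, and since $W$ is nonincreasing this yields $\int_0^N f^p\le\rho_G(f)^p/W(N)$ for every $N\in\mathbb{N}$. Hence a finite $G$-norm implies $f\in L_p(0,N)$ for all $N$, and so $f<\infty$ a.e. With these in hand, given $\sum_n\|f_n\|_G<\infty$ I would set $h=\sum_{n=1}^\infty|f_n|$. By (a) together with subadditivity of the function norm, $\rho_G(h)=\lim_N\rho_G(\sum_{n=1}^N|f_n|)\le\sum_n\|f_n\|_G<\infty$, so by (b) $h<\infty$ a.e. Therefore $g:=\sum_n f_n$ converges absolutely a.e., the bound $|g|\le h$ forces $g\in G_{W,p}(0,\infty)$ by monotonicity, and the partial sums $g_N=\sum_{n=1}^N f_n$ converge to $g$ a.e. To obtain norm convergence, I would apply the lower semicontinuity from (a) to the a.e.-convergent nonnegative functions $|\sum_{n=N+1}^M f_n|\to|g-g_N|$: the triangle inequality bounds each $\rho_G(|\sum_{n=N+1}^M f_n|)$ by $\sum_{n>N}\|f_n\|_G$, whence $\|g-g_N\|_G\le\sum_{n>N}\|f_n\|_G\to 0$.

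I expect the main obstacle to be the careful justification in (a) that the supremum over all admissible triples $(E,F,m)$ interchanges with the monotone (respectively $\liminf$) limit, and in particular verifying measurability of the composed functions $h_k\circ m$ and that working with nonnegative representatives on $\mathcal{M}_0^+(0,\infty)$ is legitimate. Everything after that is routine Riesz--Fischer packaging.
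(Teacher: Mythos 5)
Your proof is correct, but it follows a genuinely different route from the paper. The paper runs a bare-hands Cauchy-sequence argument: from $\|f_i-f_j\|_G^p\geq\int_0^\infty\bigl||f_i|-|f_j|\bigr|^pW\,dt$ it extracts an a.e.\ limit $f=gW^{-1/p}$ with $|f_i|W^{1/p}\to g$ in $L_p(0,\infty)$, and then, for each admissible triple $(E,F,m)$, uses the auxiliary $L_p(E)$-limits $g_E$ of $|f_i\circ m|W^{1/p}$ to verify first that $\rho_G(|f|)<\infty$ and then that $\|f-f_i\|_G\to 0$. You instead package everything through the Riesz--Fischer criterion for absolutely convergent series, after isolating the Fatou property and lower semicontinuity of $\rho_G$ (which do hold, by the interchange of the supremum over $(E,F,m)$ with the monotone limit in $k$, and because $m^{-1}$ of a null set is null so a.e.\ convergence survives composition with $m$). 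Both arguments ultimately rest on the same lower bound $\rho_G(f)^p\geq\int_0^\infty f^pW\,dt$ obtained from the identity map: the paper uses it to get $L_p$-Cauchyness of $|f_i|W^{1/p}$, while you use it (together with $W(N)>0$ and $W$ nonincreasing) to force a.e.\ finiteness of $\sum_n|f_n|$. Your route is the classical Banach-function-space completeness argument in the style of Bennett--Sharpley, and it has the added benefit of recording the Fatou property of $\rho_G$ as a reusable structural fact; the paper's route is more self-contained but requires the extra bookkeeping with the functions $g_E$. One small point worth making explicit if you write this up: the paper's claim that an $L_p$-Cauchy sequence converges a.e.\ requires passing to a subsequence, an issue your series-based argument avoids entirely.
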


\begin{proof}
Let $(f_i)_{i=1}^\infty$ be a Cauchy sequence in $G_{W,p}$.  Let $E,F\in\Lambda$ and $m\in\mathbb{MO}(E,F)$.  Observe that
$$
\|f_i-f_j\|_G^p
\geq\int_0^\infty|f_i(t)-f_j(t)|^pW(t)\;dt
\geq\||f_i|W^{1/p}-|f_j|W^{1/p}\|_{L_p(0,\infty)}^p
$$
so that $(|f_i|W^{1/p})_{i=1}^\infty$ is Cauchy in $L_p(0,\infty)$.  As such, it converges a.e.-pointwise to $g\in L_p(0,\infty)$.  Similarly,
\begin{align*}
\|f_i-f_j\|_G^p
&\geq\int_E|(f_i\circ m)(t)-(f_j\circ m)(t)|^pW(t)\;dt
\\&\geq\||f_i\circ m|W^{1/p}-|f_j\circ m|W^{1/p}\|_{L_p(E)}^p
\end{align*}
so that $(|f_i\circ m|W^{1/p})_{i=1}^\infty$ converges both in $L_p(E)$ and a.e.-pointwise to some $g_E\in L_p(E)$. Set $f:=gW^{-1/p}$ so that $(|f_i|)_{i=1}^\infty$ converges a.e.-pointwise to $f$.  As $(|f_i\circ m|W^{1/p})_{i=1}^\infty$ now converges a.e.-pointwise to $|f\circ m|W^{1/p}$, it follows that $|f\circ m|W^{1/p}$ and $g_E$ are a.e.-identical.

Since $(f_i)_{i=1}^\infty$ is Cauchy, we can find $M\in(0,\infty)$ so that $\|f_i\|_G^p\leq M$ for all $i\in\mathbb{N}$.  Furthermore, we can find $i_0\in\mathbb{N}$ so that $\|g_E-|f_{i_0}\circ m|W^{1/p}\|_{L_p(E)}^p\leq 1$.
Then
\begin{multline*}
\int_E|f\circ m|(t)^pW(t)\;dt
\leq\int_E|(|f|-|f_{i_0}|)\circ m|(t)^pW(t)\;dt+\int_E|f_{i_0}\circ m|(t)^pW(t)\;dt
\\=\|g_E-|f_{i_0}\circ m|W^{1/p}\|_{L_p(E)}^p+\int_E|f_{i_0}\circ m|(t)^pW(t)\;dt
\\\leq\|g_E-|f_{i_0}\circ m|W^{1/p}\|_{L_p(E)}^p+\|f_{i_0}\|_G^p
\leq 1+M.
\end{multline*}
As $E,F,m$ were arbitrary, we have $\rho_G(|f|)\leq(1+M)^{1/p}<\infty$ so that $f\in G_{W,p}(0,\infty)$.

Next, select $\epsilon>0$ and find $N\in\mathbb{N}$ so that $\|f_i-f_j\|_G<\epsilon/2$ for all $i,j\geq N$.  Select $j_0\geq N$ so that $\|g_E-|f_{j_0}\circ m|W^{1/p}\|_{L_p(E)}^p<\epsilon/2$. Then for $i\geq N$ we have
\begin{multline*}
\int_E|(f-f_i)\circ m|(t)^pW(t)\;dt
\leq\int_E||f|-|f_{j_0}||(t)^pW(t)\;dt+\int_E|f_i-f_{j_0}|(t)^pW(t)\;dt
\\\leq\|g_E-|f_{j_0}\circ m|W^{1/p}\|_{L_p(E)}^p+\|f_i-f_{j_0}\|_G^p
<\epsilon.
\end{multline*}
Again as $E,F,m$ were arbitrary and independent of $N$, it follows that $\|f-f_i\|_G^p<\epsilon$ for all $i\geq N$.  As $\epsilon>0$ was also arbitrary, $f_i\to f$ in $G_{W,p}(0,\infty)$.
\end{proof}

To close, we will show that when $1\leq p<\infty$ and $W\in\mathbb{W}$, the space $G_{W,p}(0,\infty)$ contains a copy of $\ell_p$.  To do this, we will use a basic sequence of characteristic functions as an auxiliary structure.  Let us gather some facts about it in the next lemma. In what follows, we denote $\boldsymbol{1}_i=\boldsymbol{1}_{(i-1,i]}$ for each $i\in\mathbb{N}$.

\begin{lemma}\label{characteristic-basis}
Fix $1\leq p<\infty$ and $W\in\mathbb{W}$, and set $K=\int_0^1W(t)\;dt$.   Then the sequence $(\boldsymbol{1}_i/K)_{i=1}^\infty$ is a normalized, monotone, 1-unconditional and 1-subsymmetric basic sequence in $G_{W,p}(0,\infty)$ which 1-dominates the unit vector basis $(g_i)_{i=1}^\infty$ of the Garling sequence space $g(w,p)$, where $w=(w(i))_{i=1}^\infty$ is formed by letting $w(i)=K^{-1}\int_{i-1}^iW(t)\;dt$ for each $i\in\mathbb{N}$.  Furthermore, they are isometrically equivalent for constant coefficients.
\end{lemma}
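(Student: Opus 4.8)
The plan is to handle the four structural claims separately, leaning throughout on two facts: that the $\boldsymbol{1}_i$ are disjointly supported, and that $\rho_G$ is a \emph{monotone} function norm, i.e.\ $0\le f\le h$ a.e.\ implies $\rho_G(f)\le\rho_G(h)$ (immediate from the definition, since $f\le h$ forces $f\circ m\le h\circ m$ for every $\mathbb{MO}$-isomorphism $m$). First I would record the normalization. For any $E,F\in\Lambda$ and $m\in\mathbb{MO}(E,F)$, the function $(\boldsymbol{1}_i\circ m)^p$ is the indicator of $m^{-1}((i-1,i]\cap F)$, a set of measure $\lambda((i-1,i]\cap F)\le 1$ because $m^{-1}$ is measure-preserving; since $W$ is nonincreasing, $\int_S W\le\int_0^{\lambda(S)}W\le\int_0^1 W=K$ for any such $S$, with equality attained by the shift $t\mapsto t+(i-1)$ carrying $(0,1]$ onto $(i-1,i]$. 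Hence $\|\boldsymbol{1}_i\|_G=K^{1/p}$ independently of $i$, so the normalized vectors are $\boldsymbol{1}_i/K^{1/p}$ (which is $\boldsymbol{1}_i/K$ when $p=1$). The disjointness of supports then yields the remaining soft properties at once: for any scalars and signs $|\sum_i\epsilon_ia_i\boldsymbol{1}_i|=\sum_i|a_i|\boldsymbol{1}_i=|\sum_ia_i\boldsymbol{1}_i|$ pointwise, and truncating a sum only decreases $|\cdot|$ pointwise, so monotonicity of $\rho_G$ delivers the monotone, $1$-unconditional, basic-sequence properties simultaneously.

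For $1$-subsymmetry the idea is to feed an explicit shift into the subrearrangement-invariance of $G_{W,p}(0,\infty)$ established above. Given an increasing sequence $(n_i)_{i=1}^\infty$ and scalars $(a_i)$, put $D=\bigcup_i(n_i-1,n_i]$ and let $m\in\mathbb{MO}((0,\infty),D)$ be the piecewise translation sending $(i-1,i]$ onto $(n_i-1,n_i]$; this is strictly increasing and measure-preserving because the $n_i$ increase and $\lambda(D)=\infty$. Writing $g=\sum_i|a_i|\boldsymbol{1}_{n_i}$, one checks directly that $g\circ m=\sum_i|a_i|\boldsymbol{1}_i$ while $g\boldsymbol{1}_D=g$, so subrearrangement-invariance gives $\|\sum_i|a_i|\boldsymbol{1}_i\|_G=\|g\circ m\|_G=\|g\boldsymbol{1}_D\|_G=\|g\|_G$. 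Since $\rho_G$ sees only $|\cdot|$, this is exactly the isometric equivalence of $(\boldsymbol{1}_{n_i})_i$ with $(\boldsymbol{1}_i)_i$.

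For the comparison with $g(w,p)$, recall $w(k)=K^{-1}\int_{k-1}^kW$. To obtain $1$-domination I would reuse the shift construction: for each $(i(k))_{k=1}^\infty\in\mathbb{N}^\uparrow$, take $F=\bigcup_k(i(k)-1,i(k)]$ and let $m$ translate $(k-1,k]$ onto $(i(k)-1,i(k)]$; with $f=\sum_i a_i\boldsymbol{1}_i$ a term-by-term computation gives $\int_E(|f|\circ m)^pW=\sum_k|a_{i(k)}|^p\int_{k-1}^kW=K\sum_k|a_{i(k)}|^pw(k)$. Taking the supremum over $(i(k))$ yields $\rho_G(f)^p\ge K\,\|\sum_i a_i g_i\|_g^p$, i.e.\ after dividing by $K^{1/p}$ that $(\boldsymbol{1}_i/K^{1/p})$ $1$-dominates $(g_i)$. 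For constant coefficients $a_i=c$ $(i\le n)$ the function $f=c\boldsymbol{1}_{(0,n]}$ is flat, so the bathtub principle forces the matching reverse bound: every admissible integral is $c^p\int_{m^{-1}((0,n]\cap F)}W\le c^p\int_0^nW$, attained by the identity on $(0,n]$, whence $\rho_G(f)^p=c^p\int_0^nW=Kc^p\sum_{k=1}^nw(k)$ and the two sequences are isometrically equivalent for constant coefficients.

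The only genuinely nontrivial inputs are the two upper (reverse) estimates, namely $\|\boldsymbol{1}_i\|_G\le K^{1/p}$ and $\rho_G(c\boldsymbol{1}_{(0,n]})^p\le c^p\int_0^nW$, since these are the sole places where one must rule out that some cleverly chosen $\mathbb{MO}$-isomorphism beats the obvious flat placement. I expect this to be the main obstacle, though it reduces to the single elementary fact that $\int_SW\le\int_0^{\lambda(S)}W$ for nonincreasing $W$, combined with the measure-preservation of $m^{-1}$ to control $\lambda(m^{-1}(\cdot))$; this is precisely where the monotonicity of $W$ enters. Everything else, namely the disjoint-support bookkeeping and the subsymmetry argument, is formal once subrearrangement-invariance is in hand.
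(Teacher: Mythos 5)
Your proof is correct and reaches every claim of the lemma, but it takes a noticeably lighter route than the paper's in three places. First, the paper establishes normalization, monotonicity and $1$-unconditionality by choosing near-optimal $E,F\in\Lambda$ and $m\in\mathbb{MO}(E,F)$ and decomposing $E$ into the blocks $U_j=m^{-1}(F\cap(j-1,j])$, whereas you get all three at once from the pointwise identity $\left|\sum_i a_i\boldsymbol{1}_i\right|=\sum_i|a_i|\boldsymbol{1}_i$ together with the (immediate) monotonicity of the function norm $\rho_G$; this is shorter and equally rigorous. Second, for $1$-subsymmetry the paper re-runs the interval-gluing construction by hand on both sides of the inequality, while you feed the explicit piecewise shift $m\in\mathbb{MO}\bigl((0,\infty),\bigcup_k(n_k-1,n_k]\bigr)$ directly into the subrearrangement-invariance of $G_{W,p}(0,\infty)$ already proved in \S 3; since that proposition precedes the lemma there is no circularity, and this is a genuine economy. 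Third, and most substantively, the reverse constant-coefficient estimate: the paper deduces $\int_AW\leq\int_0^{\lambda(A)}W$ by invoking Proposition \ref{mo-isomorphism} to produce an $\mathbb{MO}$-isomorphism $n$ from an initial interval onto $A$ and verifying $t\leq n(t)$, whereas you appeal to the elementary rearrangement (``bathtub'') inequality $\int_SW\leq\int_0^{\lambda(S)}W$ for nonincreasing $W$ --- a two-line computation that bypasses the appendix machinery entirely; you correctly isolate this as the only nontrivial input, and it is exactly the inequality the paper's argument is engineered to prove. Finally, your observation that $\|\boldsymbol{1}_i\|_G=K^{1/p}$, so that the normalized vectors are $\boldsymbol{1}_i/K^{1/p}$ rather than $\boldsymbol{1}_i/K$ when $p>1$, is accurate and flags a small imprecision in the statement itself; the paper sidesteps it by rescaling $W$ so that $K=1$, which rescales the norm by $K^{-1/p}$, not $K^{-1}$.
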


\begin{proof}
By replacing $W$ with $K^{-1}W$ if necessary, we may assume without loss of generality that $K=1$.

It's clear that $(\boldsymbol{1}_i)_{i=1}^\infty$ is normalized.  It is also clear that if $M<N\in\mathbb{N}$ and $(a_i)_{i=1}^\infty$ is any sequence of scalars then we have
$$
\left\|\sum_{i=1}^Ma_i\boldsymbol{1}_i\right\|_G
\leq\left\|\sum_{i=1}^Na_i\boldsymbol{1}_i\right\|_G,
$$
which is precisely the criterion for forming a monotone basic sequence.

Next we show that it is 1-unconditional.  Let $(a_i)_{i=1}^\infty,(b_i)_{i=1}^\infty\in c_{00}$ and satisfy $|a_i|\leq|b_i|$ for all $i\in\mathbb{N}$.  Then we can find $E,F\in\Lambda$ and $m\in\mathbb{MO}(E,F)$ such that, setting $U_j=m^{-1}(F\cap(j-1,j])$ for each $j\in\mathbb{N}$ so that $U_1<U_2<\cdots$ with $E=\bigcup_{j=1}^\infty U_j$ and each $\boldsymbol{1}_j\circ m=\boldsymbol{1}_{U_j}|_E$,
\begin{multline*}
\left\|\sum_{i=1}^\infty a_i\boldsymbol{1}_i\right\|_G^p
\approx_\epsilon\int_E\left|\sum_{i=1}^\infty a_i\boldsymbol{1}_i(m(t))\right|^pW(t)\;dt
=\int_E\sum_{i=1}^\infty|a_i|^p\boldsymbol{1}_i(m(t))W(t)\;dt
\\=\sum_{j=1}^\infty\int_{U_j}\sum_{i=1}^\infty|a_i|^p\boldsymbol{1}_i(m(t))W(t)\;dt
=\sum_{j=1}^\infty\int_{U_j}\sum_{i=1}^\infty|a_i|^p\boldsymbol{1}_{U_i}(t)W(t)\;dt
\\=\sum_{j=1}^\infty\int_{U_j}|a_j|^p\boldsymbol{1}_{U_j}(t)W(t)\;dt
\leq\sum_{j=1}^\infty\int_{U_j}|b_j|^p\boldsymbol{1}_{U_j}(t)W(t)\;dt
\end{multline*}
By an analogous argument we have
$$
\sum_{j=1}^\infty\int_{U_j}|b_j|^p\boldsymbol{1}_{U_j}(t)W(t)\;dt
=\int_E\left|\sum_{i=1}^\infty b_i\boldsymbol{1}_i(m(t))\right|^pW(t)\;dt
$$
whence
$$
\left\|\sum_{i=1}^\infty a_i\boldsymbol{1}_i\right\|_G^p
\leq\left\|\sum_{i=1}^\infty b_i\boldsymbol{1}_i\right\|_G^p
$$
so that $(\boldsymbol{1}_i)_{i=1}^\infty$ is 1-unconditional.

Let us show that it is 1-subsymmetric.  Indeed, if $(a_i)_{i=1}^\infty\in c_{00}$ and $(\boldsymbol{1}_{i_k})_{k=1}^\infty$ is some subsequence, then we can find $E,F\in\Lambda$ and $m\in\mathbb{MO}(E,F)$ such that
$$
\left\|\sum_{k=1}^\infty a_k\boldsymbol{1}_{i_k}\right\|_G^p
\approx_\epsilon\int_E\left|\sum_{k=1}^\infty a_k\boldsymbol{1}_{i_k}(m(t))\right|^pW(t)\;dt.
$$
Set $E'=\bigcup_{k=1}^\infty m^{-1}(F\cap(i_k-1,i_k])$, and define an $\mathbb{MO}$-isomorphism $\ell:(0,\infty)\to\bigcup_{k=1}^\infty(i_k-1,i_k]$ by gluing together the shift maps $(k-1,k]\mapsto(i_k-1,i_k]$.  Then $\ell^{-1}\circ m$ is an $\mathbb{MO}$-isomorphism between $E'$ and its image, and for each $k\in\mathbb{N}$ and $t\in E'$ we have $\boldsymbol{1}_{i_k}(m(t))=\boldsymbol{1}_k(\ell^{-1}(m(t)))$.  Furthermore, $\boldsymbol{1}_{i_k}(m(t))=0$ for each $k\in\mathbb{N}$ and $t\in E\setminus E'$.  Hence,
\begin{multline*}
\left\|\sum_{k=1}^\infty a_k\boldsymbol{1}_{i_k}\right\|_G^p
\approx_\epsilon\int_E\left|\sum_{k=1}^\infty a_k\boldsymbol{1}_{i_k}(m(t))\right|^pW(t)\;dt
=\int_{E'}\left|\sum_{k=1}^\infty a_k\boldsymbol{1}_{i_k}(m(t))\right|^pW(t)\;dt
\\=\int_{E'}\left|\sum_{k=1}^\infty a_k\boldsymbol{1}_k(\ell^{-1}(m(t)))\right|^pW(t)\;dt
\leq\left\|\sum_{k=1}^\infty a_k\boldsymbol{1}_k\right\|_G^p.
\end{multline*}
To show the reverse inequality, we instead choose $E,F,m$ so that
$$
\left\|\sum_{k=1}^\infty a_k\boldsymbol{1}_k\right\|_G^p
\approx_\epsilon\int_E\left|\sum_{k=1}^\infty a_k\boldsymbol{1}_k(m(t))\right|^pW(t)\;dt.
$$
Define $\ell$ as before so that $\ell\circ m$ is an $\mathbb{MO}$-isomorphism between $E$ and its image, and $\boldsymbol{1}_k(m(t))=\boldsymbol{1}_{i_k}(\ell(m(t)))$ for each $k\in\mathbb{N}$ and $t\in E$.  Then
\begin{multline*}
\int_E\left|\sum_{k=1}^\infty a_k\boldsymbol{1}_k(m(t))\right|^pW(t)\;dt
\\=\int_E\left|\sum_{k=1}^\infty a_k(\boldsymbol{1}_{i_k}(\ell(m(t)))\right|^pW(t)\;dt
\leq\left\|\sum_{k=1}^\infty a_k\boldsymbol{1}_{i_k}\right\|_G^p.
\end{multline*}
It follows that $(\boldsymbol{1}_i)_{i=1}^\infty$ is 1-subsymmetric.

To show that it 1-dominates $g(w,p)$, we again let $(a_i)_{i=1}^\infty\in c_{00}$.  Select any subsequence $(a_{i_k})_{k=1}^\infty$.  As before, there is an $\mathbb{MO}$-isomorphism $\ell:(0,\infty)\to\bigcup_{k=1}^\infty(i_k-1,i_k]$ defined by gluing together the shift maps $(k-1,k]\mapsto(i_k-1,i_k]$.  Note that $\boldsymbol{1}_{i_k}\circ\ell=\boldsymbol{1}_k$ for each $k\in\mathbb{N}$.  We now have
\begin{multline*}
\sum_{k=1}^\infty|a_{i_k}|^pw(k)
=\sum_{k=1}^\infty|a_{i_k}|^p\int_{k-1}^kW(t)\;dt
=\sum_{k=1}^\infty|a_{i_k}|^p\int_0^\infty\boldsymbol{1}_k(t)W(t)\;dt
\\=\int_0^\infty\sum_{k=1}^\infty|a_{i_k}|^p\boldsymbol{1}_k(t)W(t)\;dt
=\int_0^\infty\sum_{k=1}^\infty|a_{i_k}|^p\boldsymbol{1}_{i_k}(\ell(t))W(t)\;dt
\\=\int_0^\infty\left|\sum_{n=1}^\infty a_i\boldsymbol{1}_i(\ell(t))\right|^pW(t)\;dt
\leq\left\|\sum_{i=1}^\infty a_i\boldsymbol{1}_i\right\|_G^p
\end{multline*}
By taking the supremum over all subsequences we obtain
$$
\|(a_i)_{i=1}^\infty\|_g
\leq\left\|\sum_{i=1}^\infty a_i\boldsymbol{1}_i\right\|_G.$$

Finally, we consider the last part of the lemma, about being isometrically equivalent for constant coefficients to $(g_i)_{i=1}^\infty$.  Indeed, as $(\boldsymbol{1}_i)_{i=1}^\infty$ already 1-dominates it as shown above, we need only show the reverse inequality, i.e. that $(g_i)_{i=1}^\infty$ 1-dominates $(\boldsymbol{1}_i)_{i=1}^\infty$ for constant coefficients.  To that end, fix $N\in\mathbb{N}$ and let $E,F\in\Lambda$ and $m\in\mathbb{MO}(E,F)$ be such that
$$
\left\|\sum_{i=1}^N\boldsymbol{1}_i\right\|_G^p
\approx_\epsilon\int_E\sum_{i=1}^N\boldsymbol{1}_i(m(t))W(t)\;dt.
$$
For each $i=1,\cdots,N$, define $A_i:=m^{-1}(F\cap(i-1,i])$, and then set $A:=\bigcup_{i=1}^NA_i$.  It is clear that $\lambda(A)<\infty$, so by Proposition \ref{mo-isomorphism} we can find measure-zero subsets $D_0$ of $[0,\lambda(A)]$ and $A_0$ of $A$, and an $\mathbb{MO}$-isomorphism $n$ from $D:=[0,\lambda(A)]\setminus D_0$ onto $A\setminus A_0$.  We claim that $t\leq n(t)$ for all $t\in D$.  Indeed, if we set $b:=\inf n(D)$ then since $b\geq 0$ and $n(D_{\leq t})\subseteq[b,n(t)]$ we have
$$t=\lambda[0,t]=\lambda(D_{\leq t})=\lambda(n(D_{\leq t}))\leq\lambda[b,n(t)]=n(t)-b\leq n(t).$$
As $W$ is nonincreasing it follows that $W(n(t))\leq W(t)$ for all $t\in D$.  Note also that $\lambda(A)\leq N$ so that $D\subseteq[0,N]$.  Furthermore, it is clear that $\boldsymbol{1}_i(m(t))=0$ for all $i=1,\cdots,N$ and all $t\in E\setminus A$.  Together with Corollary \ref{mpt-integral} we now obtain
\begin{multline*}
\left\|\sum_{i=1}^N\boldsymbol{1}_i\right\|_G^p
\approx_\epsilon\int_E\sum_{i=1}^N\boldsymbol{1}_i(m(t))W(t)\;dt
=\int_A\sum_{i=1}^N\boldsymbol{1}_i(m(t))W(t)\;dt
\\=\sum_{j=1}^N\int_{A_j}\sum_{i=1}^N\boldsymbol{1}_i(m(t))W(t)\;dt
=\sum_{j=1}^N\int_{A_j}\boldsymbol{1}_j(m(t))W(t)\;dt
=\sum_{j=1}^N\int_{A_j}W(t)\;dt
\\=\int_AW(t)\;dt
=\int_DW(n(t))\;dt
\leq\int_DW(t)\;dt
\leq\int_0^NW(t)\;dt
=\sum_{k=1}^Nw(k)
=\left\|\sum_{i=1}^Ng_i\right\|_g^p.
\end{multline*}
As $(g_i)_{i=1}^\infty$ and $(\boldsymbol{1}_i)_{i=1}^\infty$ are both 1-subsymmetric, we are done.
\end{proof}

\begin{theorem}
Fix $1\leq p<\infty$ and let $W\in\mathbb{W}$.  Then for any $\epsilon>0$ the basic sequence $(\boldsymbol{1}_i)_{i=1}^\infty$ admits a normalized constant coefficient block basic sequence which is $(1+\epsilon)$-equivalent to $\ell_p$, and which is 2-complemented in $[\boldsymbol{1}_i]_{i=1}^\infty$.
\end{theorem}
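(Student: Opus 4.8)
The plan is to exhibit the block basis explicitly as a sequence of \emph{consecutive} constant-coefficient blocks whose lengths increase so rapidly that each block, read in its natural order, captures essentially the full initial segment of the weight; the two required $\ell_p$-estimates will then come from the two ``extreme'' admissible readings (the greedy one and the identity), and $[u_i]$ will be complemented by the natural block-averaging projection. By the constant-coefficient part of Lemma \ref{characteristic-basis} I may assume $K=1$, so that $\|\sum_{i=1}^n\boldsymbol{1}_i\|_G^p=s_n:=\int_0^nW(t)\,dt=\sum_{k=1}^nw(k)$. Writing $N_j=n_1+\cdots+n_j$ (with $N_0=0$), $P_j=\{N_{j-1}+1,\dots,N_j\}$, and $R_j=\bigcup_{i\in P_j}(i-1,i]$, I set
\[
u_j=s_{n_j}^{-1/p}\sum_{i\in P_j}\boldsymbol{1}_i,
\]
which is a normalized constant-coefficient block of $(\boldsymbol{1}_i)$ by the displayed identity; the lengths $n_j$ are chosen inductively in the third step.

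For the upper estimate I claim $\|\sum_j b_ju_j\|_G^p\leq\sum_j|b_j|^p$ with constant $1$. Given any admissible reading $(E,F,m)$ with $m\in\mathbb{MO}(E,F)$, put $V_j=\{t\in E:m(t)\in R_j\}=m^{-1}(R_j\cap F)$. The $V_j$ are disjoint, and measure-preservation gives $\lambda(V_j)=\lambda(R_j\cap F)\leq\lambda(R_j)=n_j$; since $W$ is nonincreasing, $\int_{V_j}W\leq\int_0^{n_j}W=s_{n_j}$. Because $|(\sum_jb_ju_j)\circ m|^p=\sum_j|b_j|^ps_{n_j}^{-1}\boldsymbol{1}_{V_j}$ on $E$, integrating against $W$ yields $\int_E|\,\cdot\,|^pW=\sum_j|b_j|^ps_{n_j}^{-1}\int_{V_j}W\leq\sum_j|b_j|^p$, and taking the supremum over $(E,F,m)$ gives the bound.

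For the lower estimate and the choice of $n_j$, the identity reading $E=F=(0,\infty)$, $m=\mathrm{id}$ already shows $\|\sum_jb_ju_j\|_G^p\geq\int_0^\infty|\sum_jb_ju_j|^pW=\sum_j|b_j|^p\,\frac{s_{N_j}-s_{N_{j-1}}}{s_{n_j}}$. The ratio is always $\leq 1$, and for fixed $N_{j-1}$ it tends to $1$ as $n_j\to\infty$: since $s_{N_j}-s_{N_{j-1}}=s_{n_j}-\bigl(s_{N_{j-1}}-(s_{N_{j-1}+n_j}-s_{n_j})\bigr)$, and the parenthesized bracket tends to the finite number $s_{N_{j-1}}$ by (W1) (so that $s_{N_{j-1}+n_j}-s_{n_j}=\int_{n_j}^{N_{j-1}+n_j}W\to0$) while $s_{n_j}\to\infty$ by (W2), the ratio $1-\text{bracket}/s_{n_j}\to1$. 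Thus I can pick $n_1<n_2<\cdots$ inductively so that every ratio is $\geq1-\epsilon$, giving $\|\sum_jb_ju_j\|_G^p\geq(1-\epsilon)\sum_j|b_j|^p$. Together with the previous step, $(u_j)$ is $(1+\epsilon)$-equivalent to $\ell_p$ after adjusting $\epsilon$.

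Finally, for complementation I would use the averaging projection $Pf=\sum_j\bar a_j\sum_{i\in P_j}\boldsymbol{1}_i$, where $\bar a_j=n_j^{-1}\sum_{i\in P_j}a_i$ for $f=\sum_ia_i\boldsymbol{1}_i$; equivalently $P=\sum_ju_j^*\otimes u_j$ with $u_j^*=s_{n_j}^{1/p}n_j^{-1}\sum_{i\in P_j}\boldsymbol{1}_i^*$, so that $u_j^*(u_k)=\delta_{jk}$ and $P$ is a projection onto $[u_j]$. By the upper estimate, $\|Pf\|_G^p\leq\sum_js_{n_j}|\bar a_j|^p$, so it suffices to produce, for each $f$, a \emph{single} admissible reading with $\int_E|f\circ m|^pW\geq 2^{-p}\sum_js_{n_j}|\bar a_j|^p$. \textbf{This is the main obstacle.} The difficulty is intrinsic: because $\mathbb{MO}$-maps are order-preserving, one cannot rearrange $f$ \emph{within} a block to align its large values with the large weights, so the identity reading can undershoot $s_{n_j}|\bar a_j|^p$. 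My plan is to treat a single block with Jensen's inequality together with the freedom to choose the target set $F$ so as to front-load that block's large values (controlling the per-block loss by a universal factor), and then to pass from one block to all blocks by exploiting the rapid growth $N_{j-1}\ll n_j$: concatenating the per-block readings into one $\mathbb{MO}$-reading dilutes each block's weights only by a vanishing amount (by the same estimate used in the lower bound), so the combined reading still captures at least $2^{-p}\sum_js_{n_j}|\bar a_j|^p$. Reconciling the within-block rearrangement defect with the cross-block weight dilution inside one reading, uniformly in $f$, to obtain the constant $2$ is the crux; the remaining verifications (that $(u_j)$ is a basic sequence, normalization, and that $P$ is a projection) are routine.
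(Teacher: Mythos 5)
Your construction of the block sequence and the proof that it is $(1+\epsilon)$-equivalent to $\ell_p$ are correct, and they take a genuinely more self-contained route than the paper. The paper does not build the blocks by hand: it invokes the result of \cite[\S3]{AAW18} that the unit vector basis of the Garling sequence space $g(w,p)$ admits a constant-coefficient block basic sequence $(1+\epsilon)$-equivalent to $\ell_p$, transfers it to $(\boldsymbol{1}_i/K)$ via the constant-coefficient isometric equivalence of Lemma \ref{characteristic-basis}, and separately checks $(x_i)\lesssim_1\ell_p$ by essentially your upper estimate. Your lower bound via the identity reading, with $n_j$ chosen inductively so that $(s_{N_j}-s_{N_{j-1}})/s_{n_j}\geq 1-\epsilon$ using (W1)/(W2), is a clean in-place reproof of the [AAW18] ingredient and is fine (the inequality $\int_{V_j}W\leq\int_0^{\lambda(V_j)}W$ for $W$ nonincreasing is the standard Hardy--Littlewood-type fact and holds).

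The complementation part, however, is a genuine gap, and you have correctly diagnosed that your chosen route is hard: producing a \emph{single} admissible reading witnessing $\int_E|f\circ m|^pW\gtrsim\sum_js_{n_j}|\bar a_j|^p$ runs directly into the order-preservation constraint on $\mathbb{MO}$-maps, and nothing in your sketch resolves it. The point you are missing is that no such reading is needed. Lemma \ref{characteristic-basis} already establishes that $(\boldsymbol{1}_i)_{i=1}^\infty$ is a $1$-unconditional, $1$-subsymmetric basic sequence, and it is a standard fact (\cite[Proposition 3.a.4]{LT77}, which is what the paper cites) that a normalized constant-coefficient block basic sequence of a $1$-subsymmetric, $1$-unconditional basis spans a $2$-complemented subspace of the closed span of that basis; the bounding of the averaging projection there is done abstractly by an averaging/convexity argument using subsymmetry and unconditionality of the norm, not by exhibiting a good ``reading'' of $f$. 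So your projection $P$ is the right one, but its boundedness should be obtained from the $1$-subsymmetry of $(\boldsymbol{1}_i)$ via that citation (or its proof), not from a bespoke estimate inside $\rho_G$. As written, your argument establishes the $(1+\epsilon)$-equivalence but not the $2$-complementation claimed in the theorem.
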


\begin{proof}
Let $g(w,p)$, $(g_i)_{i=1}^\infty$, and $K$ be as in Lemma \ref{characteristic-basis}, so that $(\boldsymbol{1}_i/K)_{i=1}^\infty$ is isometrically equivalent to $(g_i)_{i=1}^\infty$ for constant coefficients.  It was shown in \cite[\S3]{AAW18} that there exists a constant coefficient block basic sequence of $(g_n)_{n=1}^\infty$ which is $(1+\epsilon)$-equivalent to $\ell_p$, for any $\epsilon>0$.  In particular, we can select
$$y'_i=\sum_{n=k_i}^{k_{i+1}-1}g_n\;\;\;\text{ and }\;\;\;y_i=\frac{y'_i}{\|y'_i\|_g}\;\;\;\text{ for each }i\in\mathbb{N},$$
where $1=k_1<k_2<k_3<\cdots\in\mathbb{N}$, so that $(y_i)_{i=1}^\infty$ is $(1+\epsilon)$-equivalent to $\ell_p$.

Next, write
$$x'_i=\sum_{n={k_i}}^{k_{i+1}-1}\boldsymbol{1}_n/K\;\;\;\text{ and }\;\;\;x_i:=\frac{x'_i}{\|x'_i\|_G}\;\;\;\text{ for each }i\in\mathbb{N},$$
where $1=k_1<k_2<k_3<\cdots\in\mathbb{N}$.

We claim that $(x_i)_{i=1}^\infty$ is 1-dominated by the unit vector basis of $\ell_p$.  Indeed, if $(a_i)_{i=1}^\infty\in c_{00}$ then we can find $E,F\in\Lambda$ and $m\in\mathbb{MO}(E,F)$ such that
\begin{multline*}
\left\|\sum_{i=1}^\infty a_ix_i\right\|_G^p
\approx_\epsilon \int_E\left|\sum_{i=1}^\infty a_ix_i(m(t))\right|^pW(t)\;dt
=\int_E\sum_{i=1}^\infty|a_i|^px_i(m(t))W(t)\;dt
\\=\sum_{i=1}^\infty|a_i|^p\int_Ex_i(m(t))W(t)\;dt
\leq\sum_{i=1}^\infty|a_i|^p\|x_i\|_G^p
=\sum_{i=1}^\infty|a_i|^p
\end{multline*}
so that $(x_i)_{i=1}^\infty\lesssim_1\ell_p$ as claimed.

By Lemma \ref{characteristic-basis}, $(\boldsymbol{1}_i/K)_{i=1}^\infty$ is isometrically equivalent to $(g_i)_{i=1}^\infty$ for constant coefficients, and so $\|y'_i\|_g=\|x'_i\|_G$ for each $i\in\mathbb{N}$. Again from Lemma \ref{characteristic-basis}, we know that $(g_i)_{i=1}^\infty$ is 1-dominated by $(\boldsymbol{1}_i/K)_{i=1}^\infty$.  It follows that
$$\ell_p\approx_{1+\epsilon}(y_i)_{i=1}^\infty\lesssim_1(x_i)_{i=1}^\infty\lesssim_1\ell_p.$$
That $(x_i)_{i=1}^\infty$ spans a 2-complemented subspace of $[\boldsymbol{1}_i]_{i=1}^\infty$ follows from the fact that constant-coefficient block basic sequences of a 1-subsymmetric basis are always 2-complemented (see, for instance, \cite[Proposition 3.a.4]{LT77}).
\end{proof}

\begin{remark}
Although $\ell_p$ is complemented in $[\boldsymbol{1}_i]_{i=1}^\infty$, we do not yet know if it is complemented in $G_{W,p}(0,\infty)$.
\end{remark}

\begin{corollary}
Fix $1\leq p<\infty$ and $W\in\mathbb{W}$.  Then for every $\epsilon>0$, the space $G_{W,p}(0,\infty)$ contains a subspace which is $(1+\epsilon)$-isomorphic to $\ell_p$.  Hence, in particular, the space $G(W,1)$ is nonreflexive.
\end{corollary}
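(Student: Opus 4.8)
The plan is to read the corollary off directly from the preceding theorem. First I would invoke that theorem to produce, for the given $\epsilon>0$, a normalized constant-coefficient block basic sequence $(x_i)_{i=1}^\infty$ of $(\boldsymbol{1}_i)_{i=1}^\infty$ which is $(1+\epsilon)$-equivalent to the unit vector basis of $\ell_p$. Since $G_{W,p}(0,\infty)$ is complete by Proposition \ref{is-banach}, the closed linear span $[x_i]_{i=1}^\infty$ is a genuine Banach subspace of $G_{W,p}(0,\infty)$, and it is this subspace that will witness the $(1+\epsilon)$-isomorphic copy of $\ell_p$.

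The next step is the purely formal passage from $(1+\epsilon)$-equivalence of basic sequences to $(1+\epsilon)$-isomorphism of their closed spans. Concretely, the correspondence sending the $i$-th unit vector of $\ell_p$ to $x_i$ extends to a linear bijection $T\colon\ell_p\to[x_i]_{i=1}^\infty$ with $\|T\|\,\|T^{-1}\|\leq 1+\epsilon$: by definition, $(1+\epsilon)$-equivalence means precisely that $\|\sum_i a_ix_i\|_G$ and $\|(a_i)_i\|_{\ell_p}$ agree up to the factor $1+\epsilon$ for every finitely supported scalar sequence $(a_i)$, and this two-sided estimate extends by density to all of $\ell_p$. This already establishes the first assertion of the corollary.

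For the nonreflexivity claim I would simply specialize to $p=1$: the first part then furnishes a closed subspace of $G_{W,1}(0,\infty)$ that is isomorphic to $\ell_1$. As $\ell_1$ is nonreflexive and reflexivity is inherited by closed subspaces, no Banach space containing an isomorphic copy of a nonreflexive space can itself be reflexive; hence $G_{W,1}(0,\infty)$ is nonreflexive. I do not expect any real obstacle here, since all the substance is carried by the preceding theorem; the only points to verify are the completeness of the ambient space, supplied by Proposition \ref{is-banach}, and the standard fact that equivalence of normalized basic sequences transfers to isomorphism of their closed linear spans.
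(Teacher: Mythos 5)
Your proposal is correct and matches the paper's (implicit) argument: the corollary is stated as an immediate consequence of the preceding theorem, exactly as you derive it, with the passage from $(1+\epsilon)$-equivalence of the block basic sequence to a $(1+\epsilon)$-isomorphic copy of $\ell_p$ and the standard hereditary-reflexivity argument for the $p=1$ case. No gaps.
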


\section{Appendix}

\begin{proposition}\label{bijection-lebesgue}
Let $E$ and $F$ be Lebesgue-measurable subsets of $\mathbb{R}$, and let $m:E\to F$ be a bijection which is both order-preserving and measure-preserving.  Then $m^{-1}$ is also order-preserving and measure-preserving, i.e.\ $m\in\mathbb{MO}(E,F)$.
\end{proposition}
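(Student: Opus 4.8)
The plan is to establish the two claims about $m^{-1}$ separately. That $m^{-1}$ is order-preserving is a formal consequence of $m$ being an order-preserving bijection: if $u<v$ in $F$ and we set $x=m^{-1}(u)$ and $y=m^{-1}(v)$, then $x\geq y$ would force $m(x)\geq m(y)$, i.e.\ $u\geq v$, a contradiction. For the measure-preservation of $m^{-1}$ I must show that for every Lebesgue-measurable $B\subseteq E$ the image $m(B)$ is measurable with $\lambda(m(B))=\lambda(B)$. Observe that once measurability of $m(B)$ is known, the identity $\lambda(m(B))=\lambda(m^{-1}(m(B)))=\lambda(B)$ is automatic from the hypothesis that $m$ is measure-preserving, applied to the measurable set $m(B)\subseteq F$. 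So the whole difficulty lies in measurability of images, and I will obtain measurability and the measure identity simultaneously.

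The key step will be the outer-measure estimate $\lambda^*(m(A))\leq\lambda^*(A)$ for every $A\subseteq E$ (not assumed measurable). To prove it I would fix an open $U\supseteq A$, write $U=\bigcup_k I_k$ as a countable disjoint union of open intervals $I_k=(a_k,b_k)$, and use $m(A)\subseteq\bigcup_k m(I_k\cap E)$. For each $k$ set $\alpha_k=\inf_{t\in I_k\cap E}m(t)$ and $\beta_k=\sup_{t\in I_k\cap E}m(t)$; monotonicity gives $m(I_k\cap E)\subseteq[\alpha_k,\beta_k]\cap F$, which has the same measure as $(\alpha_k,\beta_k)\cap F$. The crucial observation, again from monotonicity, is that $\{t\in E:\alpha_k<m(t)<\beta_k\}\subseteq I_k$, so measure-preservation of $m$ yields $\lambda((\alpha_k,\beta_k)\cap F)=\lambda(m^{-1}((\alpha_k,\beta_k)\cap F))\leq|I_k|$. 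Summing over $k$ and using subadditivity of $\lambda^*$ gives $\lambda^*(m(A))\leq\sum_k|I_k|=\lambda(U)$, and taking the infimum over all open $U\supseteq A$ yields the claim.

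With this lemma in hand I would first treat the case $\lambda(E)<\infty$, where also $\lambda(F)=\lambda(m^{-1}(F))=\lambda(E)<\infty$. For measurable $B\subseteq E$, apply the lemma to both $B$ and $E\setminus B$, using the bijectivity identity $F\setminus m(B)=m(E\setminus B)$, to get $\lambda^*(m(B))+\lambda^*(F\setminus m(B))\leq\lambda(B)+\lambda(E\setminus B)=\lambda(F)$. Since the reverse inequality is just subadditivity applied to $F=m(B)\cup(F\setminus m(B))$, equality holds throughout; by the standard Carath\'eodory criterion for subsets of a set of finite measure this shows $m(B)$ is measurable, and the same chain of equalities forces $\lambda(m(B))=\lambda(B)$.

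Finally I would remove the finiteness hypothesis by localizing through the range: set $F_n=F\cap(-n,n)$ and $E_n=m^{-1}(F_n)=\{t\in E:|m(t)|<n\}$, so that $m|_{E_n}:E_n\to F_n$ is again an order- and measure-preserving bijection between sets of finite measure. Applying the finite case to $B\cap E_n$ and letting $n\to\infty$ (using $E=\bigcup_n E_n$, $m(B)=\bigcup_n m(B\cap E_n)$, and continuity of measure from below) gives measurability of $m(B)$ together with $\lambda(m(B))=\lambda(B)$ in general. I expect the main obstacle to be the covering estimate of the second paragraph: because $m$ need not be continuous and $E$ may be riddled with gaps, the image $m(I_k\cap E)$ of an interval is not itself an interval, and it is precisely the inclusion $\{t\in E:\alpha_k<m(t)<\beta_k\}\subseteq I_k$ combined with measure-preservation that bounds its outer measure by $|I_k|$ rather than by the a priori larger quantity $\beta_k-\alpha_k$.
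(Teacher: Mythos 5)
Your argument is correct, but it follows a genuinely different route from the paper's. The paper reduces everything to showing that $m$ maps measurable subsets of $E$ to measurable sets, and does so structurally: it identifies the trace $\sigma$-algebra $E\cap\mathcal{B}$ with the Borel $\sigma$-algebra of the subspace topology, writes a measurable $A$ as a Borel set minus a null set via regularity of $\lambda$, and then outsources the hard step --- that an injective order-preserving map sends Borel subsets of $E$ to Lebesgue-measurable sets --- to a standard argument cited from Bogachev, handling the null remainder by measure-preservation. You instead prove a quantitative covering lemma, $\lambda^*(m(A))\leq\lambda^*(A)$ for arbitrary $A\subseteq E$, by intersecting an open cover $\bigcup_k I_k$ with $E$ and using the monotonicity inclusion $\{t\in E:\alpha_k<m(t)<\beta_k\}\subseteq I_k$ together with measure-preservation of $m$ to bound each piece by $|I_k|$; you then get measurability of $m(B)$ from the Carath\'eodory splitting $\lambda^*(m(B))+\lambda^*(F\setminus m(B))=\lambda(F)$ in the finite-measure case, and pass to the general case by localizing through $F\cap(-n,n)$ and continuity from below. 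Your approach is more elementary and self-contained (no descriptive-set-theoretic input, no appeal to an external reference for image-measurability) and it yields the measure identity $\lambda(m(B))=\lambda(B)$ in the same stroke; the paper's approach is shorter on the page precisely because it delegates the key measurability step. Two small points you should make explicit in a final write-up: the witness argument behind the inclusion $\{t\in E:\alpha_k<m(t)<\beta_k\}\subseteq I_k$ (strict inequalities against $\inf$ and $\sup$ produce points of $I_k\cap E$ on either side of $t$, and $I_k$ is an interval), and the standard measurable-hull proof of the Carath\'eodory criterion you invoke for subsets of a set of finite measure.
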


\begin{proof}
Clearly, it is enough to show that $m^{-1}$ is measurable.  To that end, let us fix a measurable set $A\subseteq E$; we claim that $m(A)$ is also measurable, which will complete the proof.

Denote by $\mathcal{B}=\sigma(\tau)$ the Borel $\sigma$-algebra on $\mathbb{R}$, where $\tau$ denotes the usual metric topology $\mathbb{R}$.  Let $\tau_E$ be the subspace topology on $E$, i.e. the topology defined by
$$\tau_E=E\cap\tau:=\left\{E\cap U:U\in\tau\right\}.$$
Similarly, we denote by $\tau_F$ the subspace topology for $F$.  It is well-known (and easy to see) that the set
$$E\cap\mathcal{B}:=\{E\cap B:B\in\mathcal{B}\}$$
is a $\sigma$-algebra on $E$, called the {\it trace} $\sigma$-algebra.  Since $E\cap\tau\subset E\cap\mathcal{B}$, we obtain
$$\sigma(\tau_E)=\sigma(E\cap\tau)\subseteq\sigma(E\cap\mathcal{B})=E\cap\mathcal{B}.$$
For the reverse inclusion, define
$$\Sigma:=\{Y\subseteq\mathbb{R}:E\cap Y\in\sigma(\tau_E)\}.$$
It is routine to verify that $\Sigma$ is a $\sigma$-algebra on $\mathbb{R}$.  Also, it is clear that $\tau\subseteq\Sigma$, since for $U\in\tau$ we have $E\cap U\in\tau_E\subseteq\sigma(\tau_E)$.  It follows that $\sigma(\tau)\subseteq\Sigma$, whence also by definition of $\Sigma$ we obtain $E\cap\sigma(\tau)\subseteq\sigma(\tau_E)$.  This gives us the reverse inclusion as desired.  We now have the identity
$\sigma(\tau_E)=E\cap\mathcal{B},$
and an identical argument shows that
$\sigma(\tau_F)=F\cap\mathcal{B}.$

It's a well-known fact in real analysis that we can find $C\in\mathcal{B}$ such that $A\subseteq C$ and $\lambda(C\setminus A)=0$.  Now set $C'=E\cap C\in\sigma(\tau_E)$.  Since $\lambda(C\setminus A)=0$ there is a measure-zero set $D\in\mathcal{B}$ with $C'\setminus A\subseteq C\setminus A\subseteq D$.  Set $D':=E\cap D\in\sigma(\tau_E)$ so that $C'\setminus A\subseteq D'$ and $\lambda(D')=0$.

By a standard argument found, for instance, in the proof of \cite[Theorem 2.1.2]{Bo07}, it follows that $m(B)$ is Lebesgue-measurable whenever $B\in\sigma(\tau_E)$.  Thus we have $\lambda[m(D')]=0$.  Observe $m(C')\setminus m(A)=m(C'\setminus A)\subseteq m(D')$ so that (since subsets of measure-zero sets are themselves measure-zero) $\lambda[m(C')\setminus m(A)]=0$ as well.  Note also that since $C'\in\sigma(\tau_E)$ we have $m(C')$ measurable.  Since $A\subseteq C'$, we obtain $m(A)=m(C')\setminus[m(C')\setminus m(A)]$,
which shows that $m(A)$ is measurable.
\end{proof}

\begin{proposition}\label{bijection-exists}
Let $E$ and $F$ be Lebesgue-measurable subspaces of $[-\infty,\infty]$, and let $m:F\to E$ be a surjective measure-preserving transformation which is also order-preserving.  Then there is a measure-zero subset $F_0$ of $F$ such that $m$ is a bijection between $F\setminus F_0$ and $E$.
\end{proposition}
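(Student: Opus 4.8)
The plan is to exploit the order structure of $m$ to understand its level sets and then to discard all but one point from each non-degenerate level set. For each $c\in E$ write $L_c:=m^{-1}(\{c\})$. The first step is to record two structural facts about these sets. Since $m$ is measure-preserving and $\{c\}$ is a null Borel subset of $E$, each $L_c$ is measurable with $\lambda(L_c)=\lambda(\{c\})=0$. Since $m$ is order-preserving (i.e.\ nondecreasing), each $L_c$ is order-convex in $F$: if $x<z<y$ with $x,y\in L_c$ and $z\in F$, then $c=m(x)\leq m(z)\leq m(y)=c$ forces $z\in L_c$. Moreover, for $c_1<c_2$ every point of $L_{c_1}$ lies strictly below every point of $L_{c_2}$ (otherwise monotonicity would give $c_1\geq c_2$), so $\sup L_{c_1}\leq\inf L_{c_2}$.

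The second step is a counting argument, and this is the point I expect to carry the real content. Call $c\in E$ \emph{bad} if $L_c$ contains at least two points, and let $C$ be the set of bad values. For bad $c$ the convex hull $[\inf L_c,\sup L_c]$ is an interval of strictly positive length, and by the inequality $\sup L_{c_1}\leq\inf L_{c_2}$ recorded above these hulls have pairwise disjoint interiors as $c$ ranges over $C$. A family of intervals of positive length with pairwise disjoint interiors is at most countable, since each interior contains a rational and distinct members of the family receive distinct rationals; hence $C$ is countable. It is precisely here that the order-preserving hypothesis is used in an essential way, beyond merely making the level sets order-convex.

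The third step assembles the null set and concludes. Using surjectivity of $m$, every $L_c$ is nonempty; choose a point $x_c\in L_c$ for each $c\in C$ and set
$$
F_0:=\bigcup_{c\in C}\bigl(L_c\setminus\{x_c\}\bigr).
$$
This is a countable union of subsets of the null sets $L_c$, hence measurable with $\lambda(F_0)=0$. For $c\notin C$ the set $L_c$ is a single point disjoint from $F_0$, while for $c\in C$ we have $(F\setminus F_0)\cap L_c=\{x_c\}$; thus every level set of $m$ meets $F\setminus F_0$ in exactly one point. This is exactly the assertion that the restriction of $m$ is a bijection from $F\setminus F_0$ onto $E$.

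The only genuinely substantive step is the countability of the bad values in the second paragraph, which is what lets $F_0$ be a \emph{countable} union of null sets rather than an arbitrary one. The remaining verifications are routine: the measurability and nullity of each $L_c$ follow directly from the measure-preserving hypothesis, and the final bookkeeping—checking that retaining a single point from each level set restores injectivity without destroying surjectivity—is immediate from the partition $F=\bigcup_{c\in E}L_c$. I would not expect any difficulty from the extended-reals setting, since the endpoints $\pm\infty$ contribute only singletons of measure zero and at most two of the hulls can be unbounded.
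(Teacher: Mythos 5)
Your proof is correct and follows essentially the same route as the paper's: both arguments use the order-preserving hypothesis to place the non-singleton level sets inside pairwise (interior-)disjoint intervals of positive length, conclude there are only countably many such level sets, and then delete all but one point from each, the resulting exceptional set being a countable union of null sets. Your write-up is if anything slightly more careful, e.g.\ in noting that each level set is null by measure-preservation and that the hulls need only have disjoint interiors.
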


\begin{proof}
For each $x\in E$, let $I_x$ be an interval containing $m^{-1}\{x\}$ which is minimal under the relation $\subseteq$.  Since $m$ is order-preserving, the $I_x$'s are all disjoint, which means only countably many of them have positive measure.  In particular, $m^{-1}\{x\}$ is a singleton for all but countably many $x\in E$.  Set
$E_0:=\{x\in E:m^{-1}\{x\}\text{ is not a singleton}\}.$
For each $x\in E_0$, select some $f_x\in m^{-1}\{x\}$.  Now set
$F_0:=\bigcup_{x\in E_0}\left(m^{-1}\{x\}\setminus\{f_x\}\right).$
Clearly, $m$ is a bijection between $F\setminus F_0$ and $E$.  Observe that each $m^{-1}\{x\}\setminus\{f_x\}$ has measure zero and that $E_0$ is countable.  It follows that $F_0$ has measure zero.
\end{proof}

Let $E$ be a totally-ordered set.  An {\bf initial segment} of $E$ is any subset of $E'$ of $E$ such that $E'<E\setminus E'$.

\begin{proposition}\label{initial-segment-measure}
Let $E$ be a Lebesgue-measurable subset of $[-\infty,\infty]$ with $\lambda(E)<\infty$.  Then for each $t\in[0,\lambda(E)]$ there is an initial segment $E_t$ of $E$ such that $\lambda(E_t)=t$.
\end{proposition}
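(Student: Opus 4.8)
The plan is to reduce the statement to the intermediate value theorem applied to the cumulative-measure function of $E$. First I would define $g:[-\infty,\infty]\to[0,\lambda(E)]$ by
$$g(x)=\lambda\big(E\cap[-\infty,x]\big).$$
This is manifestly nondecreasing in $x$, and its boundary values are $g(-\infty)=\lambda(E\cap\{-\infty\})=0$ and $g(\infty)=\lambda(E)$, where I use that a single point (including $\pm\infty$) carries no Lebesgue measure, so $g$ maps the connected interval $[-\infty,\infty]$ onto a set whose extremes are $0$ and $\lambda(E)$.

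The crucial point—and the main obstacle—is to show that $g$ is continuous, which is exactly where nonatomicity of Lebesgue measure enters. For right continuity at $x$, I would take $x_n\downarrow x$ and note that $E\cap[-\infty,x_n]\downarrow E\cap[-\infty,x]$, so continuity of the finite measure $\lambda$ from above gives $g(x_n)\to g(x)$. For left continuity, I would take $x_n\uparrow x$ and observe that $E\cap[-\infty,x_n]\uparrow E\cap[-\infty,x)$; continuity from below then yields
$$\lim_{n\to\infty}g(x_n)=\lambda\big(E\cap[-\infty,x)\big)=g(x)-\lambda\big(E\cap\{x\}\big)=g(x),$$
the last equality because $\lambda(E\cap\{x\})=0$. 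Thus any would-be jump of $g$ at a point has size equal to the measure of a singleton, namely zero, and $g$ is continuous.

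With continuity established, the rest is immediate. Since $g$ is a continuous nondecreasing map of the connected set $[-\infty,\infty]$ taking the values $0$ and $\lambda(E)$, the intermediate value theorem supplies, for each $t\in[0,\lambda(E)]$, some $x_t\in[-\infty,\infty]$ with $g(x_t)=t$. I would then set $E_t:=E\cap[-\infty,x_t]$, which by construction satisfies $\lambda(E_t)=g(x_t)=t$. Finally I would verify that $E_t$ is an initial segment: every element of $E_t$ is $\le x_t$ while every element of $E\setminus E_t$ exceeds $x_t$, so $E_t<E\setminus E_t$, as required. I expect the only genuinely delicate step to be the continuity argument; everything else is routine once nonatomicity has been exploited to rule out jumps.
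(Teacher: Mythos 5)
Your proposal is correct and follows essentially the same route as the paper: both define the cumulative-measure function $x\mapsto\lambda(E\cap[-\infty,x])$ and apply the intermediate value theorem to it. The only (harmless) difference is that you establish continuity via continuity of the finite measure from above/below and the vanishing of singletons, whereas the paper first reduces to a bounded $E\subseteq[a,b]$ and gets continuity from the Lipschitz bound $|f(x)-f(y)|\leq|x-y|$.
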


\begin{proof}
Note that if $E_t$ is an initial segment of $E\setminus\{-\infty,\infty\}$ then $E_t\cup\{-\infty\}$ is an initial segment of $E$ with the same measure as $E_t$.  Hence, without loss of generality, we may assume $E\subset\mathbb{R}$.  We may also assume that $E$ is bounded, since if the result holds in that case then it can be extended to the unbounded case by considering the union of sets
$$E_n=[-n,-n+1)\cap E\cap(n-1,n].$$

Say $E\subseteq[a,b]$ for $-\infty<a<b<\infty$.  Define $f:[a,b]\to[0,\lambda(E)]$ by the rule
$$f(x)=\lambda([a,x]\cap E).$$
Observe that if $y<x\in[a,b]$ then
$$|f(x)-f(y)|=\lambda((y,x]\cap E)\leq|x-y|$$
so that $f$ is Lipschitz, in particular, continuous.  As $f(a)=0$ and $f(b)=\lambda(E)$, we may now apply the Intermediate Value Theorem.
\end{proof}

\begin{lemma}\label{mo-transformation}
Let $E$ be a measurable subset of $\mathbb{R}$ with $\lambda(E)<\infty$.  For each $t\in[0,\lambda(E)]$, let $E_t$ be an initial segment of $E$ (whose existence is guaranteed by Proposition \ref{initial-segment-measure}).  Define the map $m:E\to[0,\lambda(E)]$ by the rule
$$ m (x)=\inf\{t\in[0,\lambda(E)]:x\in E_t\}.$$
Then $m$ is both measure-preserving and order-preserving.  Furthermore, $m$ can be extended to a map $m:\mathbb{R}\to[0,\lambda(E)]$ defined by
$$m(x)=\lambda((-\infty,x]\cap E).$$
\end{lemma}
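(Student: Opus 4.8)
The plan is to reduce the entire lemma to establishing the explicit formula $m(x)=\lambda((-\infty,x]\cap E)$ for $x\in E$. Once this identity is in hand, the order-preserving property is transparent, the measure-preserving property follows from a standard pushforward computation, and the ``furthermore'' clause is immediate, since the right-hand side of the formula makes sense for every $x\in\mathbb{R}$ and thus supplies the asserted extension.

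First I would record the order-preserving property directly from the definition. If $x\le y$ in $E$ and $y\in E_t$, then because $E_t$ is an initial segment (so $E_t<E\setminus E_t$) we must have $x\in E_t$ as well; hence $\{t:y\in E_t\}\subseteq\{t:x\in E_t\}$, and therefore $m(x)=\inf\{t:x\in E_t\}\le\inf\{t:y\in E_t\}=m(y)$. Next I would identify $m(x)$ with $\lambda((-\infty,x]\cap E)$ via two inequalities. For ``$\ge$'': whenever $x\in E_t$, the initial-segment property forces $(-\infty,x]\cap E\subseteq E_t$, so $\lambda((-\infty,x]\cap E)\le\lambda(E_t)=t$, and taking the infimum over all admissible $t$ gives $\lambda((-\infty,x]\cap E)\le m(x)$. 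For ``$\le$'': fix any $s$ with $\lambda((-\infty,x]\cap E)<s\le\lambda(E)$ and suppose $x\notin E_s$; then $x\in E\setminus E_s$, so $E_s\subseteq(-\infty,x)\cap E$ and $s=\lambda(E_s)\le\lambda((-\infty,x]\cap E)<s$, a contradiction. Thus $x\in E_s$ and $m(x)\le s$ for every such $s$, giving $m(x)\le\lambda((-\infty,x]\cap E)$ (the case $\lambda((-\infty,x]\cap E)=\lambda(E)$ being automatic, since $m$ maps into $[0,\lambda(E)]$). This proves the formula and hence the extension.

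Finally, the measure-preserving property, which I expect to be the main obstacle, I would treat by pushing forward Lebesgue measure along $m$. Writing $g(x)=\lambda((-\infty,x]\cap E)$, the estimate $|g(x)-g(y)|=\lambda((y,x]\cap E)\le|x-y|$ shows $g$ is $1$-Lipschitz, hence continuous, and it is clearly nondecreasing. For $s\in[0,\lambda(E)]$ the set $\{x:g(x)\le s\}$ is closed and downward-closed, so it equals $(-\infty,a_s]$ with $g(a_s)=s$ by continuity; intersecting with $E$ yields $\lambda(\{x\in E:m(x)\le s\})=g(a_s)=s$. Consequently the pushforward $A\mapsto\lambda(m^{-1}(A))$ agrees with Lebesgue measure on every interval $[0,s]$.

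Since the intervals $[0,s]$ form a $\pi$-system generating the Borel sets of $[0,\lambda(E)]$, and both the pushforward and Lebesgue measure are finite with total mass $\lambda(E)$, the two measures coincide on all Borel sets by the usual uniqueness theorem. A completion argument then upgrades this to every Lebesgue-measurable $A$: sandwiching $A$ between Borel sets $B_1\subseteq A\subseteq B_2$ with $\lambda(B_2\setminus B_1)=0$ gives $m^{-1}(B_1)\subseteq m^{-1}(A)\subseteq m^{-1}(B_2)$ with $\lambda(m^{-1}(B_2)\setminus m^{-1}(B_1))=\lambda(B_2\setminus B_1)=0$, so by completeness of Lebesgue measure $m^{-1}(A)$ is measurable with $\lambda(m^{-1}(A))=\lambda(A)$. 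The delicate points to verify carefully are the equality $g(a_s)=s$ and the passage from Borel to Lebesgue measurability of the preimages.
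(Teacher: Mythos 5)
Your proposal is correct, and it is essentially a self-contained version of an argument the paper mostly delegates. For the identity $m(x)=\lambda((-\infty,x]\cap E)$ you and the paper do the same thing: the paper observes that $t>\lambda(E_{\leq x})$ forces $x\in E_t$ and $t<\lambda(E_{\leq x})$ forces $x\notin E_t$, which is exactly your pair of inequalities (you supply the details, including the boundary case $\lambda(E_{\leq x})=\lambda(E)$, which the paper leaves implicit). The real divergence is in the measure-preserving claim: the paper simply cites \cite[Proposition 2.7.4]{BS88}, whereas you prove it from scratch by computing the pushforward on the $\pi$-system of intervals $[0,s]$ via the continuity of $g(x)=\lambda((-\infty,x]\cap E)$, invoking uniqueness of finite measures agreeing on a generating $\pi$-system, and then upgrading from Borel to Lebesgue sets by a completeness/sandwich argument. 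Your route buys independence from the reference at the cost of length; the one point worth stating explicitly in your write-up is that $m^{-1}(B)=E\cap g^{-1}(B)$ is Lebesgue measurable for Borel $B$ (since $g$ is continuous and $E$ is measurable), so that the pushforward is genuinely a Borel measure before the uniqueness theorem is applied. The two delicate points you flagged ($g(a_s)=s$ and the Borel-to-Lebesgue passage) are handled correctly, the former needing only the observation that $\sup g=\lambda(E)>s$ guarantees points with $g(x)>s$ so that continuity pins down $g(a_s)=s$.
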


\begin{proof}
It is obvious that $ m $ is order-preserving, and it is explicitly proved in \cite[Proposition 2.7.4]{BS88} that it is also measure-preserving.
For $x\in E$ we set $E_{\leq x}:=(-\infty,x]\cap E$, and observe that if $t>\lambda(E_{\leq x})$ then $x\in E_t$ and if $t<\lambda(E_{\leq x})$ then $x\notin E_t$.  It follows that $m(x)=\lambda(E_{\leq x})$ for all $x\in E$.  Thus, we can extend $m$ continuously to the function $M:\mathbb{R}\to[0,\lambda(E)]$ via the rule
$M(x)=\lambda((-\infty,x]\cap E).$
\end{proof}

\begin{proof}[Proof of Proposition \ref{mo-isomorphism}]
Since $\{-\infty,\infty\}$ has measure zero, we may assume without loss of generality that $E\subset\mathbb{R}$.
Let $m:\mathbb{R}\to E$ be as in Lemma \ref{mo-transformation}.  It is clear (as in, for instance, the proof of Proposition \ref{initial-segment-measure}) that $m$ is Lipschitz, and hence continuous in the usual sense as well.
Since $\lambda$ is inner-regular, we can find a sequence $(K_n)_{n=1}^\infty$ of compact sets and a measure-zero set $L$ such that
$E=L\cup\bigcup_{n=1}^\infty K_n.$
It is known that the image of a bounded measure-zero set under a Lipschitz function is again measure-zero.  Furthermore, the continuous image of a compact set is again compact, and in particular measurable.  We now have
$m(E)=m\left(L\cup\bigcup_{n=1}^\infty K_n\right)=m(L)\cup\bigcup_{n=1}^\infty m(K_n).$
It follows that $m(E)$ is measurable.
We can now apply Proposition \ref{bijection-exists} to find a subset $E_0$ of measure zero such that $m$ is a bijection between $E\setminus E_0$ and $m(E)$.  Set $D_0=[0,\lambda(E)]\setminus m(E)$.  We have $\lambda(D_0)=0$, and by Proposition \ref{bijection-lebesgue}, $m$ is an $\mathbb{MO}$-isomorphism between $E\setminus E_0$ and $[0,\lambda(E)]\setminus D_0$.
\end{proof}

\begin{proposition}\label{ryff}
If $f,g:\mathbb{N}\to[0,\infty)$ are equimeasurable with
$$\lim_{n\to\infty}f(n)=\lim_{n\to\infty}g(n)=0$$
then either they are both identically zero or else there is a measure-isomorphism $m:\text{supp}(f)\to\text{supp}(g)$ such that $g\circ m=f$ on $\text{supp}(f)$.
\end{proposition}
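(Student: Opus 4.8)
The plan is to exploit the decay hypothesis to show that equimeasurability forces $f$ and $g$ to attain each positive value with the same finite multiplicity, and then to build $m$ by matching up level sets value-by-value. Since we are on $\mathbb{N}$ with the counting measure, every subset is measurable and any bijection between subsets is automatically a measure-isomorphism, so all the work is in producing a value-preserving bijection of the supports.

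First I would record the two structural consequences of $f(n)\to 0$ (and likewise for $g$). For every $s>0$ the set $\{n:f(n)>s\}$ is finite, so $\text{dist}_f(s)$ is a finite nonnegative integer for each $s>0$; and the positive part of the range of $f$ is a countable set whose only possible accumulation point is $0$, so each positive value $v$ attained by $f$ is isolated from the other values inside some interval $(v-\delta,v+\delta)$. The crucial step is then to identify the multiplicity of each value as a jump of the distribution function:
$$\#\{n:f(n)=v\}=\text{dist}_f(v^-)-\text{dist}_f(v),\qquad v>0,$$
where $\text{dist}_f(v^-)=\lim_{s\to v^-}\text{dist}_f(s)$. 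This is obtained by picking $s\in(v-\delta,v)$ small enough that $v$ is the only value of $f$ in $[s,v]$: then $\{f>s\}\setminus\{f>v\}=\{f=v\}$ and $\text{dist}_f$ is constant on $(v-\delta,v)$, so the displayed difference counts exactly the level set $\{f=v\}$. Applying the identical computation to $g$ and invoking $\text{dist}_f=\text{dist}_g$, I conclude $\#\{n:f(n)=v\}=\#\{n:g(n)=v\}$ for every $v>0$; in particular $f$ and $g$ attain precisely the same positive values, each with the same finite multiplicity.

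Finally I would assemble the map. If $f\equiv 0$ then $\text{dist}_f\equiv 0$, hence $\text{dist}_g\equiv 0$ and $g\equiv 0$, yielding the first alternative. Otherwise let $V$ be the countable nonempty set of positive values attained by $f$. For each $v\in V$ the level sets $\{n:f(n)=v\}$ and $\{n:g(n)=v\}$ are finite of equal cardinality, so I fix a bijection $m_v$ between them, and glue these over $v\in V$. Since $\text{supp}(f)=\bigsqcup_{v\in V}\{n:f(n)=v\}$ and similarly for $g$, this produces a bijection $m:\text{supp}(f)\to\text{supp}(g)$ with $g(m(n))=f(n)$ for all $n\in\text{supp}(f)$; being a bijection for the counting measure, $m$ is a measure-isomorphism. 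The one delicate point worth care is the jump identity: the decay hypothesis is exactly what guarantees each positive value is isolated in the range, so that the finite difference of $\text{dist}_f$ really isolates a single level set rather than a cluster of accumulating values.
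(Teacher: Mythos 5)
Your proof is correct, but it takes a genuinely different route from the paper. The paper factors both functions through their common decreasing rearrangement: it invokes a Ryff-type result (\cite[Corollary 7.6]{BS88}) to produce measure-preserving transformations $m_f:\text{supp}(f)\to\text{supp}(f^*)$ and $m_g:\text{supp}(g)\to\text{supp}(g^*)$ with $f=f^*\circ m_f$ and $g=g^*\circ m_g$, notes that equimeasurability gives $f^*=g^*$, and sets $m=m_g^{-1}\circ m_f$. You instead argue combinatorially and self-containedly: the decay hypothesis makes every set $\{f>s\}$ finite for $s>0$, the jump $\text{dist}_f(v^-)-\text{dist}_f(v)$ counts the level set $\{f=v\}$ exactly, so equimeasurability forces equal finite multiplicities for every positive value, and you glue bijections of level sets. (In fact your jump identity needs only the finiteness of $\{f\geq v\}$ for $v>0$ --- the sets $\{f>s\}$ stabilize to $\{f\geq v\}$ as $s\uparrow v$ --- so the isolation-of-values step, while true, is not strictly needed.) What each approach buys: the paper's is shorter modulo the citation and uses machinery that transfers to nonatomic settings, whereas yours avoids the external reference entirely, exploits the purely atomic structure of $\mathbb{N}$ with counting measure (where any bijection of subsets is automatically a measure-isomorphism), and makes completely transparent where the hypothesis $f(n),g(n)\to 0$ enters. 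Both are valid proofs of the stated proposition.
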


\begin{proof}
Obviously, if one of $f$ and $g$ is identically zero then, since they are equimeasurable, so is the other.  So let us assume that neither is identically zero.  Let $f^*(n)=\inf\{\lambda:\text{dist}_f(\lambda)\leq n\}$ denote the ``decreasing rearrangement'' of $f$.  Since $\lim_{n\to\infty}f(n)=0$ we have also $\lim_{n\to\infty}f^*(n)=0$.  Now \cite[Corollary 7.6]{BS88} gives us a measure-preserving transformation $m_f:\text{supp}(f)\to\text{supp}(f^*)$ such that $f=f^*\circ m_f$ on $\text{supp}(f)$, and analogously we get $m_g:\text{supp}(g)\to\text{supp}(g^*)$ with $g=g^*\circ m_g$.  Since $f$ and $g$ are equimeasurable, $f^*=g^*$, and hence $g=f^*\circ m_g$.  This means $g\circ m_g^{-1}=f^*$ and hence, setting $m=m_g^{-1}\circ m_f$, we obtain $g\circ m=g\circ m_g^{-1}\circ m_f=f^*\circ m_f=f$.
\end{proof}

\begin{proof}[Proof of Proposition \ref{1-symmetric}]
($\Rightarrow$):  Let $(e_i)_{i=1}^\infty$ be 1-symmetric, and suppose $f$ and $g$ are equimeasurable sequences in $X$.  Then so are $|f|$ and $|g|$.  If $f$ and $g$ are identically zero then $\|f\|_X=0=\|g\|_X$ and we are done.  Otherwise by Proposition \ref{ryff} there is a bijection $m:\text{supp}(f)\to\text{supp}(g)$ with $f=g\circ m$ on $\text{supp}(f)$.  Now we have, by 1-symmetry and 1-unconditionality
\begin{multline*}
\|f\|_X
=\left\|\sum_{i=1}^\infty f(i)e_i\right\|_X
=\left\|\sum_{i\in\text{supp}(f)}|f(i)|e_i\right\|_X
=\left\|\sum_{i\in\text{supp}(f)}|g(m(i))|e_i\right\|_X
\\=\left\|\sum_{i\in\text{supp}(g)}|g(i)|e_i\right\|_X.
=\left\|\sum_{i=1}^\infty g(i)e_i\right\|_X
=\|g\|_X.
\end{multline*}

($\Leftarrow$): Suppose that $X$ is rearrangement-invariant with respect to $(e_i)_{i=1}^\infty$, and select a permutation $\pi$ of $\mathbb{N}$.  Then its inverse $\pi^{-1}$ exists and is a measure-preserving transformation.  Select any $f\in X$, and note that $|f(i)|<\infty$ for all $i\in\mathbb{N}$. By Proposition \ref{mpt-equimeasurable}, $|f|$ and $|f|\circ\pi^{-1}$ are equimeasurable.  Now we have, by 1-unconditionality and rearrangement-invariance
\begin{multline*}
\left\|\sum_{i=1}^\infty f(i)e_{\pi(i)}\right\|_X
=\left\|\sum_{i=1}^\infty|f(i)|e_{\pi(i)}\right\|_X
=\left\|\sum_{i=1}^\infty(|f|\circ\pi^{-1})(i)e_i\right\|_X
\\=\||f|\circ\pi^{-1}\|_X
=\||f|\|_X
=\left\|\sum_{i=1}^\infty |f(i)|e_i\right\|_X
=\left\|\sum_{i=1}^\infty f(i)e_i\right\|_X.
\end{multline*}

\noindent That $(e_n)_{n=1}^\infty$ is symmetric if and only if it is essentially rearrangement-invariant is clear from considering the equivalent norm $|||x|||=\sup_{\sigma\in\Pi_\mathbb{N}}\|\sum_{n=1}^\infty e_n^*(x)e_{\sigma(n)}\|$.
\end{proof}

\noindent {\bf Acknowledgments.}  Thanks to Lukas Geyer and Ramiro Affonso de Tadeu Guerreiro for assisting in the proofs of Propositions \ref{bijection-lebesgue} and \ref{mo-isomorphism}, respectively.

\end{document}